\newtheorem{theorem}{Theorem}[section]
\newtheorem{claim}[theorem]{Claim}
\newtheorem{lemma}[theorem]{Lemma}
\newtheorem{fact}[theorem]{Fact}
\newtheorem{corollary}[theorem]{Corollary}
\newtheorem{proposition}[theorem]{Proposition}
\newtheorem*{remark*}{Remark}
\newtheorem{question}[theorem]{Question}
\def\eps{\varepsilon}
\newenvironment{poc}{\begin{proof}[Proof of claim]}{\end{proof}}
\def\R{\mathbb R}
\def\N{\mathbb N}
\def\Z{\mathbb Z}
\def\C{\mathbb C}
\renewcommand{\Re}{\operatorname{Re}}
\renewcommand{\Im}{\operatorname{Im}}
\title{Graph with any rational density and no rich subsets of linear size}
\author{Seonghyuk Im \thanks{Department of Mathematical Sciences, KAIST, South Korea and Extremal Combinatorics and Probability Group (ECOPRO), Institute for Basic Science (IBS), Daejeon, South Korea.  Supported by the National Research Foundation of Korea (NRF) grant funded by the Korea government(MSIT) No. RS-2023-00210430 and by the Institute for Basic Science (IBS-R029-C4). E-mail: seonghyuk@kaist.ac.kr.}
\and Suyun Jiang\thanks{School of Artificial Intelligence, Jianghan University, Wuhan, Hubei, China, and Extremal Combinatorics and Probability Group (ECOPRO), Institute for Basic Science (IBS), Daejeon, South Korea. Supported by National Natural Science Foundation of China (11901246) and China Scholarship Council and IBS-R029-C4.
Email: jiang.suyun@163.com. }
\and
Hong Liu \thanks{Extremal Combinatorics and Probability Group (ECOPRO), Institute for Basic Science (IBS), Daejeon, South Korea.
Supported by IBS-R029-C4.
 Email: hongliu@ibs.re.kr. }
\and 
Tuan Tran \thanks{School of Mathematical Sciences, University of Science and Technology of China, China.  Supported by National Key Research and Development Program of China 2023YFA 1010200, and the Excellent Young Talents Program (Overseas) of the National Natural Science Foundation of China. Email: trantuan@ustc.edu.cn.}
}
\begin{document}
\maketitle
\begin{abstract}
    A well-known application of the dependent random choice asserts that any $n$-vertex graph $G$ with positive edge density  
    contains a `rich' vertex subset $U$ of size $n^{1-o(1)}$ such that every pair of vertices in $U$ has at least $n^{1-o(1)}$ common neighbors.
    In 2003, using a beautiful construction on hypercube, Kostochka and Sudakov showed that this is tight: one cannot 
    remove the $o(1)$ terms even if the edge density of $G$ is  
    $1/2$. 
    In this paper, we generalize their result from pairs to tuples. To be precise, we show that given every pair of positive integers $p<q$, there is an $n$-vertex graph $G$ for all sufficiently large $n$ with edge density $p/q$ such that any vertex subset $U$ of size $\Omega(n)$ contains $q$ vertices, any $p+1$ of which have $o(n)$ common neighbors. The edge density $p/q$ is best possible. Our construction uses isoperimetry and concentration of measure on high dimensional complex spheres. 
\end{abstract}

\section{Introduction}
{\em Dependent random choice} is a simple yet surprisingly powerful probabilistic technique which has many striking applications to Extremal Graph Theory, Ramsey Theory, Additive Combinatorics, and Combinatorial Geometry.
Its early version was  
proved and applied by various researchers, starting with Gowers \cite{Gowers1998new}, Kostochka and R\"{o}dl \cite{Kostochka2001graphs}, and Sudakov \cite{Sudakov2003few}. 
For more information about dependent random choice and its applications, we refer interested readers to the survey \cite{fox2011dependentrandomchoice}. 
The following lemma~\cite{Alon2003turannumber, Kostochka2001graphs, Sudakov2003few} is a typical result proved by dependent random choice. Let $a, d, m, n, r$ be positive integers. Let $G = (V, E)$ be a graph with $|V| = n$ and average degree $d = 2|E(G)|/n$. If there is a positive integer $t$ such that $\frac{d^t}{n^{t-1}}-\binom{n}{r}\left(\frac{m}{n}\right)^t \ge a$,
then $G$ contains a subset $U$ of at least $a$ vertices such that every $r$ vertices in $U$ have at least $m$ common neighbors.  
A useful corollary of this lemma reads as follows:
\begin{itemize}
    \item[] \emph{Every $n$-vertex graph with $\Omega(n^2)$ edges contains a vertex subset $U$ of size $n^{1-o(1)}$ in which every two vertices have $n^{1-o(1)}$ common neighbors.} 
\end{itemize}
In many applications, it would be ideal to have the size of $U$ and the number of common neighbors of all its pairs be linear in $n$.
This problem was raised in the study of the famous Burr-Erd\H{o}s conjecture~\cite{Burr1973, Kostochka2004}. 
However, Kostochka and Sudakov~\cite{kostochka_sudakov_2003} gave a negative answer via a surprising construction on hypercube.

\begin{theorem}[\cite{fox2011dependentrandomchoice,kostochka_sudakov_2003}]\label{thm:pair}
    For sufficiently large $n$, there is a graph $G$ on $n$ vertices with edge density $1/2+o(1)$ such that any subset $U \subseteq V(G)$ of linear size
    contains a pair of vertices with at most $o(n)$ common neighbors.
\end{theorem}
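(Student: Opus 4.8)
The plan is to reconstruct the Kostochka--Sudakov example on the Boolean cube and to extract its two quantitative ingredients: a codegree estimate and an isoperimetric bound. Fix a large integer $d$, set $n=2^d$ and $V=\{0,1\}^d$, write $\rho(x,y)$ for the Hamming distance, and put $x\sim y$ exactly when $\rho(x,y)\le d/2$. Every vertex then has degree $\sum_{i\le d/2}\binom{d}{i}=\bigl(\tfrac12+o(1)\bigr)2^d$ by Stirling's estimate, so the edge density of $G$ is $\tfrac12+o(1)$. To realise every large $n$ rather than only powers of two, one blows this graph up, replacing each vertex by a block of $\floor{n/2^d}$ or $\ceil{n/2^d}$ twins and joining two blocks completely iff the original vertices were adjacent; this perturbs the density only by $o(1)$ (the graph is vertex-transitive, so which vertices get duplicated is immaterial), and a linear-size vertex set of the blow-up projects to a linear-size subset of the cube, so it suffices to analyse $n=2^d$.

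Second I would establish the codegree bound: if $\rho(x,y)=j$, then
\[
 \mathrm{codeg}(x,y)=O\Bigl(2^d\sqrt{\tfrac{d-j+1}{j+1}}\Bigr).
\]
Indeed, by translation invariance the codegree equals that of $\vec 0$ and $w:=x\oplus y$, and by coordinate symmetry it depends only on $j=|\mathrm{supp}(w)|$. Describing a common neighbour $z$ by its number $a\le j$ of ones on $\mathrm{supp}(w)$ and its number $b\le d-j$ of ones off it, the conditions $\rho(z,\vec0)=a+b\le d/2$ and $\rho(z,w)=(j-a)+b\le d/2$ force $b\le(d-j)/2$ and confine $a$ to an interval of length at most $(d-j)-2b+1$; summing $\binom{j}{a}\binom{d-j}{b}$ over this range and using $\binom{j}{\floor{j/2}}=O(2^j/\sqrt{j+1})$ together with the mean-deviation bound $\sum_{b}\binom{d-j}{b}\,\bigl|\tfrac{d-j}{2}-b\bigr|=O\bigl(2^{d-j}\sqrt{d-j+1}\bigr)$ yields the claim. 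In particular a near-antipodal pair, $\rho(x,y)\ge(1-\gamma)d$, has at most $C\sqrt{\gamma}\,n$ common neighbours for an absolute constant $C$.

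Third comes the decisive step: every dense set must contain such a pair. Suppose $U\subseteq V$ with $|U|\ge\alpha n$ and that every pair in $U$ had codegree greater than $\eps n$; then, by the estimate above, every pair of $U$ would lie at Hamming distance at most $(1-\delta)d$ with $\delta:=(\eps/C)^2$, i.e.\ $U$ would have diameter at most $(1-\delta)d$. But Kleitman's classical diameter theorem bounds a subset of $\{0,1\}^d$ of diameter at most $t$ by $\sum_{i\le t/2}\binom{d}{i}$, which for $t=(1-\delta)d$ is $2^d\,\Pr[\mathrm{Bin}(d,\tfrac12)\le(1-\delta)d/2]=2^d e^{-\Omega(\delta^2 d)}=o(n)$, contradicting $|U|\ge\alpha n$ once $d$ is large. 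Hence $U$ contains a pair of codegree at most $\eps n$; running the same argument quantitatively, Kleitman forces in $U$ a pair at distance $\ge d-O_\alpha(\sqrt d)$, whose codegree is $O_\alpha\bigl(n/d^{1/4}\bigr)=o(n)$ uniformly over all linear-size $U$, which is exactly the assertion of the theorem.

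The conceptual heart, and the step I expect to be the main obstacle, is the isoperimetric input. It is not enough that near-antipodal pairs are poor; one must show that a set of positive density is \emph{forced} to contain such a pair, and this is delicate because membership in a Hamming ball of radius $d/2$ is no restriction at all, so a crude ball bound fails --- one genuinely needs a diameter-type inequality (Kleitman's theorem, or an equivalent vertex-isoperimetric statement). By contrast the codegree computation, although it must be carried out carefully enough to expose the $\sqrt{(d-j)/d}$ decay, is routine.
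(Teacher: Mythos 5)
Your proof is correct and follows the same hypercube construction that the paper sketches (and that the cited Kostochka--Sudakov and Fox--Sudakov treatments carry out): edges given by Hamming distance at most $d/2$, a binomial-sum codegree estimate showing near-antipodal pairs have $o(n)$ common neighbours, and Kleitman's diameter theorem as the isoperimetric input forcing such a pair in any linear-size set. The one point to make explicit is that in the blow-up step you should take $d=\lfloor\log_2 n\rfloor$ so that each block has size $O(1)$ --- twins in a common block form an independent set whose pairs have $\Theta(n)$ codegree, so the blow-up factor must stay bounded --- and with this reading your argument goes through as written.
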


In this paper, we construct graphs with any rational edge density, generalizing \cref{thm:pair} from pairs to tuples.

\begin{theorem}\label{thm:tuple}
Let $p$ and $q$ be positive integers with $p<q$. Then for sufficiently large $n$, there is a graph $G$ on $n$ vertices with edge density $p/q+o(1)$ such that any subset $U \subseteq V(G)$ of size $\Omega(n)$ contains $q$ vertices, any $p+1$ of which have at most $o(n)$ common neighbors.
\end{theorem}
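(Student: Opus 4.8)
The plan is to realise $G$ on a set $V$ of $n$ points of the complex unit sphere $S=S^{2d-1}\subseteq\C^d$, where $d=d(n)\to\infty$ grows slowly, say $d=o(\log n/\log\log n)$. Set $\zeta=e^{2\pi i/q}$ and let $A\subseteq\R/2\pi\Z$ be the symmetric arc $(-p\pi/q,\,p\pi/q)$, of length $\tfrac pq\cdot2\pi$. I would take $V$ uniformly at random and put $u\sim v$ precisely when $\arg\langle u,v\rangle\in A$; this is a well-defined undirected graph because $\langle v,u\rangle=\overline{\langle u,v\rangle}$ and $A=-A$ (the probability-zero event $\langle u,v\rangle=0$ being declared a non-edge). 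For random $V$ the quantity $\arg\langle u,v\rangle$ is uniform on $[0,2\pi)$, so each pair is an edge with probability exactly $|A|/2\pi=p/q$; by McDiarmid the edge density is then $p/q+o(1)$ with high probability. The object driving the whole argument is the orbit $u,\zeta u,\dots,\zeta^{q-1}u$ of a point $u\in S$ under the unitary $v\mapsto\zeta v$, for which $\langle\zeta^j u,x\rangle=\zeta^j\langle u,x\rangle$.

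The combinatorial core is this elementary fact: \emph{if $I$ is an arc of length $\tfrac pq\cdot2\pi$, then $\bigcap_{k=0}^{p}\big(I-\tfrac{2\pi j_k}{q}\big)=\varnothing$ for any $p+1$ distinct $j_0,\dots,j_p\in\Z_q$; and if $I$ is enlarged to length $\tfrac pq\cdot2\pi+\tau$ (with $\tau<2\pi/q$) the intersection is an arc of length at most $\tau$, nonempty only when the $j_k$ form a block of $p+1$ consecutive residues.} (A common point forces all $p+1$ of the $j_k$ to lie in an arc of $q$-th roots of angular length $\tfrac pq\cdot2\pi+\tau$, i.e.\ essentially in a window of $p$ consecutive residues.) From this I would deduce \textbf{Step 1}: if $w_0,\dots,w_{q-1}\in S$ satisfy $\|w_j-\zeta^j u\|\le r$ for some $u\in S$, then for every $(p{+}1)$-subset $J$ the common neighbourhood $N^*_J:=\{x\in S:\arg\langle w_j,x\rangle\in A\text{ for all }j\in J\}$ has uniform measure $\mu(N^*_J)=O_{p,q}(r^{1/3})$. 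Indeed, writing a uniform $x\in S$ as $g/\lVert g\rVert$ with $g$ a standard complex Gaussian on $\C^d$, the vector $(\langle w_j,g\rangle)_{j\in J}$ is a centred complex Gaussian with covariance the Gram matrix $G_J=(\langle w_a,w_b\rangle)_{a,b\in J}$, and $\arg\langle w_j,x\rangle=\arg\langle w_j,g\rangle$. Since $\lVert G_J-vv^{*}\rVert=O_{p,q}(r)$ for $v=(\zeta^j)_{j\in J}$, such a Gaussian equals $h_0v+\eta$ with $h_0$ a standard complex Gaussian and $\lVert\eta\rVert=O(\sqrt r)$ off a negligible event; hence lying in $N^*_J$ forces either $|h_0|\le r^{1/6}$ (probability $O(r^{1/3})$) or $\arg h_0\in\bigcap_{j\in J}\big((A\text{ enlarged by }O(r^{1/3}))-\tfrac{2\pi j}{q}\big)$, a set of measure $O(r^{1/3})$ by the fact above, hence of probability $O(r^{1/3})$ since $\arg h_0$ is uniform. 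In particular a genuine orbit ($r=0$) has $N^*_J=\varnothing$ for every $J$ — the mechanism the theorem must show is essentially tight.

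\textbf{Step 2}: every $U\subseteq V$ with $|U|\ge\delta n$ contains such an approximate orbit. Fix a partition $\mathcal P$ of $S$ into $K\le(C/\epsilon)^{2d}$ cells of diameter at most $\epsilon=\epsilon(d)\to0$, invariant under $v\mapsto\zeta v$ (common-refine a small grid with its $\zeta$-translates). Let $\mathrm{Ess}(U)$ be the union of the cells meeting $U$ in at least $\tfrac{\delta n}{2K}$ points; then $|U\cap\mathrm{Ess}(U)|\ge\tfrac{\delta n}{2}$, and since with high probability every union $S'$ of cells with $\mu(S')<\delta/4$ satisfies $|V\cap S'|<\tfrac{\delta n}{2}$ (a union bound over the $2^K$ cell-unions, valid because $K=e^{O(d\log d)}\ll n$), we get $\mu(\mathrm{Ess}(U))\ge\delta/4=:\delta_2>0$. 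By $\zeta$-invariance $\mathrm{Ess}(\zeta^{-j}U)=\zeta^{-j}\mathrm{Ess}(U)$ has the same measure. Now apply the spherical isoperimetric inequality / Lévy concentration: a set of measure $\ge\delta_2$ has $t$-neighbourhood of measure $\ge1-e^{-ct^2d}$ once $d$ is large relative to $\delta_2$, so taking $t=d^{-1/4}$ gives $\mu\big(\bigcap_{j=0}^{q-1}N_t(\zeta^{-j}\mathrm{Ess}(U))\big)\ge1-qe^{-c\sqrt d}>0$. Pick $u$ in this intersection; for each $j$ there is a point of $\zeta^{-j}\mathrm{Ess}(U)$ within $t$ of $u$, its cell also contains some $\zeta^{-j}w_j$ with $w_j\in U$, whence $\|\zeta^j u-w_j\|\le t+\epsilon=:r\to0$.

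Putting these together: given $U$ with $|U|\ge\delta n$, Step 2 produces $w_0,\dots,w_{q-1}\in U$ with $\|w_j-\zeta^j u\|\le r\to0$, Step 1 gives $\mu(N^*_J)=O(r^{1/3})=o(1)$ for every $(p{+}1)$-subset $J$, and finally, because the family $\{\{x\in S:\arg\langle a_k,x\rangle\in A\ \forall k\le p\}:a_0,\dots,a_p\in S\}$ has VC dimension $O(pd)$, the Vapnik–Chervonenkis uniform-convergence estimate (which is $o(1)$ as $d=o(\log n/\log\log n)$) yields $|V\cap N^*_J|\le n(\mu(N^*_J)+o(1))=o(n)$ for all $J$ at once. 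Thus any $p+1$ of the $w_j$ have $o(n)$ common neighbours, as required; for $p=1,q=2$ this recovers \cref{thm:pair}. I expect the main difficulty to be not any single step but the joint calibration of $d(n)$, the cell size $\epsilon(d)$ and the radius $t(d)$ so that the regularization of $U$, the isoperimetric lower bound, the Gaussian/Gram-matrix perturbation in Step 1 and the VC estimate all have slack simultaneously; conceptually, the crux is pairing the arithmetic fact that $p+1$ distinct $q$-th roots never fit in an arc of angular length $\tfrac pq\cdot2\pi$ with the fact that multiplication by $\zeta$ is a measure-preserving isometry of the complex sphere, which is exactly what makes approximate $\zeta$-orbits unavoidable in sets of linear size.
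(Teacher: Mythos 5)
Your proposal is sound in outline and takes a genuinely different route from the paper. Three structural choices distinguish it. First, you use the \emph{symmetric} arc $A=(-p\pi/q,\,p\pi/q)$, which makes $u\sim v$ a well-defined undirected relation on a \emph{single} sphere; the paper instead uses the half-open arc $[0,2\pi p/q)$, which is not symmetric under $\langle u,v\rangle\mapsto\overline{\langle u,v\rangle}$, and must therefore assemble the vertex set from $t$ ordered copies of the sphere and only take cross edges. Second, your arithmetic lemma (an open arc of length $2\pi p/q$ contains at most $p$ of the $q$ equally spaced roots, so $\bigcap_{k}(A-2\pi j_k/q)=\varnothing$ for any $p+1$ distinct residues) applies directly to every $(p{+}1)$-subset of an orbit and is insensitive to whether $p/q$ is above or below $1/2$; the paper instead has to locate a \emph{pair} at orbit separation $\ell\in\{p,\dots,q-p\}$, run a second round of isoperimetry inside the common neighbourhood, and bound an imaginary part by hand — all of which only works for $p/q\le 1/2$, forcing a separate complement trick (their \S 3.2) for the dense case. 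Your Gaussian/Gram-matrix argument in Step~1 attacks all $p+1$ indices at once and subsumes both cases. Third, you place vertices uniformly at random with growing dimension $d=o(\log n/\log\log n)$ and then pay for this with a $2^K$ union bound over cell unions plus a VC uniform-convergence estimate, whereas the paper fixes the dimension $k$ (depending only on $q,\eps$) and uses a deterministic partition of the sphere into $n$ equal-measure cells, one vertex per cell, so that $|V\cap S'|\le n\lambda(S')$ holds exactly and no concentration or VC machinery is needed. In short, your construction is conceptually cleaner (no case split, one sphere) but leans harder on probability (McDiarmid, Gaussian conditioning, Lévy concentration with $d\to\infty$, VC uniform convergence), while the paper's is more elementary and fully deterministic at the cost of a case distinction and a more roundabout two-stage isoperimetric argument. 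The main places where your sketch still needs care, which you yourself flag, are the joint calibration $\epsilon(d)\to 0$, $t(d)=d^{-1/4}$, $d(n)\to\infty$ slowly, and the quantitative version of the Gram-perturbation step (the exact exponent $r^{1/3}$ is not sacred — any $\mu(N_J^*)\le f(r)$ with $f(r)\to0$ as $r\to0$ suffices — but the Davis--Kahan/Weyl bookkeeping and the Gaussian tail events should be written out to make $h_0$ genuinely near-standard and $\lVert\eta\rVert$ controlled with quantified probability).
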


The edge density $p/q$ in \cref{thm:tuple} is best possible, as shown by the following result.

\begin{proposition}\label{prop:tightness}
    The following holds for every real $\eps>0$ and integers $p,q, n$ with $1\le p<q$ and $n\ge 3q/\eps$. 
    If $G$ is an $n$-vertex graph with 
    $e(G) \ge (\frac{p}{q} + \eps){n \choose 2}$, then there is a subset $U \subseteq V(G)$ of size 
    $|U|\ge \frac{\eps}{3}n$ such that 
    any $q$-subset  
    of $U$ contains  
    $p+1$ elements having 
    at least $\frac{\eps}{4\binom{q}{p+1}}n$ common neighbors.
\end{proposition}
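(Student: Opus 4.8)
The plan is to argue by contradiction. Say a $(p+1)$-subset of $V(G)$ is \emph{poor} if it has fewer than $cn$ common neighbors, where $c:=\frac{\eps}{4\binom{q}{p+1}}$, and say a $q$-subset $T\subseteq V(G)$ is \emph{bad} if every one of its $\binom{q}{p+1}$ subsets of size $p+1$ is poor. The conclusion of the proposition is precisely that $G$ contains a set $U$ with $|U|\ge\frac{\eps}{3}n$ containing no bad $q$-set, so I assume towards a contradiction that every vertex subset of $G$ of size at least $\frac{\eps}{3}n$ contains a bad $q$-set.

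The heart of the argument is the inequality
\[
\sum_{v\in T}\deg_G(v) < \Big(p+\tfrac{\eps}{4}\Big)n \qquad\text{for every bad }q\text{-set }T.
\]
To prove it I would double count the pairs $(R,x)$ with $R\in\binom{T}{p+1}$ and $x\in V(G)$ adjacent to every vertex of $R$. Since each such $R$ is poor, the number of these pairs is at most $\binom{q}{p+1}\cdot cn=\frac{\eps}{4}n$; on the other hand it equals $\sum_{x\in V(G)}\binom{|T\cap N(x)|}{p+1}$, and the elementary bound $\binom{a}{p+1}\ge a-p$, valid for all integers $a\ge 0$ and provable by a one-line induction, gives $\sum_{x}\binom{|T\cap N(x)|}{p+1}\ge\sum_{x}\big(|T\cap N(x)|-p\big)=\big(\sum_{v\in T}\deg_G(v)\big)-pn$. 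Comparing the two estimates yields the displayed bound. I expect this to be the main obstacle: applying inclusion--exclusion to a single $(p+1)$-subset of $T$ only gives $\sum_{v\in T}\deg_G(v)\lesssim\frac{q}{p+1}(p+c)n$, which is far too weak once $q>p+1$, so it seems essential to exploit all $\binom{q}{p+1}$ poor subsets simultaneously, which is exactly what the convexity of $a\mapsto\binom{a}{p+1}$ buys here.

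The rest is a greedy packing. Starting from $V(G)$ and repeatedly removing a bad $q$-set from the current vertex set -- which is possible as long as at least $\frac{\eps}{3}n$ vertices remain, by the contradiction hypothesis -- produces pairwise disjoint bad $q$-sets $T_1,\dots,T_m$ with $m\le n/q$ whose union omits fewer than $\frac{\eps}{3}n$ vertices; here the hypothesis $n\ge 3q/\eps$ is used to ensure each intermediate set still has size at least $\frac{\eps}{3}n\ge q$. Summing the key inequality over the $T_j$ and bounding the degrees of the at most $\frac{\eps}{3}n$ uncovered vertices trivially by $n$ gives
\[
2e(G)=\sum_{v\in V(G)}\deg_G(v) < m\Big(p+\tfrac{\eps}{4}\Big)n+\tfrac{\eps}{3}n^2 \le \frac{(p+\eps/4)n^2}{q}+\frac{\eps n^2}{3}.
\]
Feeding in $e(G)\ge(\frac{p}{q}+\eps)\binom{n}{2}$ and simplifying (one may assume $\frac{p}{q}+\eps\le 1$, hence $\eps<1$, since otherwise the hypothesis is vacuous) forces $\eps\big(1-\tfrac{1}{4q}-\tfrac13\big)n<\frac{p}{q}+\eps<2$, that is, $n<\frac{24q}{\eps(8q-3)}$. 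Since $q\ge 2$ we have $\frac{24q}{8q-3}\le 3q$, so $n<3q/\eps$, contradicting $n\ge 3q/\eps$ and completing the argument.
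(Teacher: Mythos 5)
Your proof is correct, but it takes a genuinely different route from the paper. The paper argues directly: it takes $U$ to be the set of vertices whose degree is at least $(\frac{p}{q}+\frac{\eps}{2})n$, shows $|U|\ge\frac{\eps}{3}n$ by a one-line degree count, and then for any $q$-subset $U'\subseteq U$ uses the guaranteed large degree sum together with two successive pigeonhole applications (first to find $\Omega(\eps n)$ vertices outside $U'$ with at least $p+1$ neighbors in $U'$, then to find a $(p+1)$-subset receiving a $1/\binom{q}{p+1}$ share of them) to extract the desired rich $(p+1)$-tuple. You instead run a proof by contradiction: you greedily pack almost all of $V(G)$ with disjoint ``bad'' $q$-sets, prove the key degree-sum bound $\sum_{v\in T}\deg_G(v)<(p+\frac{\eps}{4})n$ for each bad $T$ via the double count $\sum_x\binom{|T\cap N(x)|}{p+1}$ and the inequality $\binom{a}{p+1}\ge a-p$, and then sum over the packing to contradict the global edge count. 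Both are correct; the convexity step in your key lemma is a clean single-inequality replacement for the paper's two-step pigeonhole (and gives the sharper local threshold $(p+\frac{\eps}{4})n$ rather than $(p+\frac{q\eps}{2})n$), but the paper's direct construction of $U$ avoids the greedy-packing and contradiction scaffolding entirely and is shorter overall. One small remark: your aside that a single $(p+1)$-subset would ``only give $\sum_{v\in T}\deg_G(v)\lesssim\frac{q}{p+1}(p+c)n$'' understates what an averaging/pigeonhole over all $\binom{q}{p+1}$ subsets achieves --- the paper's two-step pigeonhole also exploits all subsets simultaneously, just without the explicit convexity identity; the real advantage of the paper's approach is structural (no contradiction, no packing), not in the estimate.
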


\begin{proof} 
    Let $X$ be the set of vertices of degree less than 
    $(\frac{p}{q} +\frac{\eps}{2})n$, and 
    $U$ be the complement of $X$. Then 
    $\sum_{v\in X}d_{G}(v)<(\frac{p}{q}+\frac{\eps}{2})n^2$. It follows that the sum of the degrees of vertices in 
    $U$ is at least 
    \[
    2e(G)-\sum_{v\in X}d_{G}(v) \ge 2\left(\frac{p}{q} + \eps\right){n \choose 2}-\left(\frac{p}{q}+\frac{\eps}{2}\right)n^2\geq \frac{\eps}{3}n^2,
    \]
    which implies $|U|\ge \frac{\eps}{3}n$. 
    Consider any $q$-subset $U'$ of $U$.
    The sum of the degree of vertices in $U'$ is at least 
    $|U'|\cdot (\frac{p}{q}+\frac{\eps}{2})n=(p+q\eps/2)n$.
    Thus, by the pigeonhole principle, there are at least $\eps n/4$ vertices in $V(G) \setminus U'$  
    each of which has at least 
    $p+1$ neighbors in 
    $U'$.
    Again by the pigeonhole principle, there are $p+1$ vertices in $U'$ having at least $\frac{\eps n}{4\binom{q}{p+1}}$ common neighbors. 
\end{proof}

\cref{thm:tuple} is a consequence of a more general Ramsey-type result we prove for set-colorings. A $(q,p)$-coloring of the complete graph $K_n$ is an assignment $\varphi$ of a set of $p$ colors from a palette of $q$ colors to each edge of $K_n$. We say that a subgraph $H$ of $K_n$ is monochromatic (with respect to $\varphi$) if $\bigcap_{e\in E(H)}\varphi(e)\ne\varnothing$. Note that when $p=1$, a $(q,1)$-coloring is just a $q$-edge-coloring of $K_n$. Ramsey-type problems for $(q,p)$-colorings have been studied by many researchers for over half a century (see e.g. \cite{Alon2002,aragao2024setcoloring, Balla2023setcoloring, Conlon2022setcoloring, Conlon2023setcoloring, Erdos1965, Xu2009}). Our Ramsey-type result for $(q,p)$-colorings, of which \cref{thm:tuple} is a special case, can be stated as follows.

\begin{theorem}\label{thm:ramsey_version}
    Let $p$ and $q$ be positive integers with $p<q$, and let $\eps>0$.
    Then for sufficiently large $n$, there is a $(q,p)$-coloring $\varphi$ of $K_n$ such that the following holds.
    Every monochromatic subgraph $G$ of $K_n$ does not have a 
    vertex subset $U\subseteq  V(G)$ of size at least $\eps  n$ such that any  
    $q$ 
    vertices  
    in $U$ 
    contain $p+1$ vertices 
    having at least $\eps  n$ common neighbors in $G$.
\end{theorem}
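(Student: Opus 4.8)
The plan is to build the $(q,p)$-coloring geometrically on a high-dimensional complex sphere, generalizing the principle behind \cref{thm:pair}: in the two-coloured hypercube construction an ``antipodal pair'' has $o(n)$ common neighbours in each colour, and here the role of the antipodal pair is played by a \emph{rotation family} $v,\omega v,\dots,\omega^{q-1}v$ with $\omega=e^{2\pi i/q}$. Concretely, fix a large constant $d=d(\eps,p,q)$ and let $x_1,\dots,x_n$ be a suitably generic (e.g.\ random) configuration on the unit sphere $S\subseteq\mathbb C^{d}$. Partition the circle $\mathbb R/2\pi\mathbb Z$ into $q$ equal arcs and assign to each edge $\{x_i,x_j\}$ a $p$-element colour set determined by the arc containing $\arg\langle x_i,x_j\rangle$, arranged so that for each colour $c$ the set $A_c:=\{\theta:\,c\text{ occurs at }\theta\}$ is a union of arcs of total length $2\pi p/q$ compatible with the $\mathbb Z/q$-rotation of the circle; since $\arg\langle x_i,x_j\rangle=-\arg\langle x_j,x_i\rangle$ one must symmetrize this assignment (a routine but finicky step). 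The monochromatic graph of colour $c$ is then $G_c=\{\{x_i,x_j\}:\arg\langle x_i,x_j\rangle\in A_c\}$, and because the $U(1)$-action $x_i\mapsto e^{i\alpha}x_i$ preserves the law of a random point while rotating $\arg\langle x_i,x_j\rangle$, this argument is uniform on the circle, so $G_c$ has edge density $p/q+o(1)$.

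A monochromatic subgraph $G$ sits inside some $G_c$ with no larger codegrees, so it suffices to show: for every $c$ and every $U\subseteq V$ with $|U|\ge\eps n$, there are $q$ vertices in $U$ every $p+1$ of which have fewer than $\eps n$ common neighbours in $G_c$. The first ingredient is a \emph{codegree lemma}. If $y_1,\dots,y_{p+1}$ lie within $\eta$ of $p+1$ distinct members $\omega^{k_1}v,\dots,\omega^{k_{p+1}}v$ of a rotation family, then for a random $z\in S$ one has $\langle y_l,z\rangle=\omega^{k_l}\langle v,z\rangle+E_l$ with $|E_l|$ of typical order $\eta/\sqrt d$, while $|\langle v,z\rangle|$ is of order $1/\sqrt d$ with $\Pr[|\langle v,z\rangle|<s/\sqrt d]=O(s^{2})$; off that small event $\arg\langle y_l,z\rangle=\tfrac{2\pi k_l}{q}+\arg\langle v,z\rangle+O(\eta/s)$. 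Requiring $\arg\langle y_l,z\rangle\in A_c$ for all $l$ thus pins $\arg\langle v,z\rangle$ into $\bigcap_l\big(A_c-\tfrac{2\pi k_l}{q}\big)$ thickened by $O(\eta/s)$; since $A_c$ has length $2\pi p/q$ and the distinct shifts $\tfrac{2\pi k_l}{q}$ span at least $2\pi p/q$ (the minimum, attained exactly by $p+1$ consecutive roots), this intersection has measure $O(\eta/s)$. Balancing $s\asymp\eta^{1/3}$ gives $\Pr_z[z\text{ a common }G_c\text{-neighbour}]\le C(p,q)\eta^{2/3}$, hence $\mathrm{codeg}_{G_c}\le C(p,q)\eta^{2/3}n+o(n)$ for any such tuple (concentration over the at most $n^{O(q)}$ relevant tuples makes the $o(n)$ uniform).

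The remaining step is to locate, inside every $U$ with $|U|\ge\eps n$, $q$ vertices whose points are $\eta$-close to some rotation family, for $\eta=\Theta(1/\sqrt d)$ --- and this concentration/isoperimetry step is, I expect, the main obstacle. Put $X_U=\{x_i:i\in U\}$; since each $\omega^{k}$ acts on $S$ by a measure-preserving isometry, $\omega^{k}X_U$ has (empirical) measure $\ge\eps$ for all $k$. By concentration of measure on $S$ (Lévy's lemma), with $\eta=C_0(\eps,q)/\sqrt d$ for $C_0$ large, the $\eta$-neighbourhood of any set of measure $\ge\eps$ has measure $\ge 1-\tfrac1{2q}$; hence $\bigcap_{k=0}^{q-1}\big(\omega^{k}X_U\big)_\eta$ has measure $\ge\tfrac12$ and is in particular nonempty. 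Taking $v$ in it, for each $k$ there is $i_k\in U$ within $\eta$ of $\omega^{-k}v$, and as the points $\omega^{-k}v$ are pairwise at distance $2\sin(\pi/q)\gg\eta$ the vertices $i_0,\dots,i_{q-1}$ are distinct and form the desired $q$-tuple. Making the statement ``every measure-$\ge\eps$ subset of the configuration has $\eta$-neighbourhood of measure $\ge 1-\tfrac1{2q}$'' hold \emph{simultaneously over all} $U$ requires an empirical-measure (net/VC) argument ensuring the configuration behaves like the uniform measure on spherical caps; pinning down $C_0$ and arranging the symmetrized colouring cleanly for all $p<q$ are further points demanding care.

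Combining the two steps, every $U$ with $|U|\ge\eps n$ contains $q$ vertices all of whose $(p+1)$-subsets have $G_c$-codegree at most $C(p,q)(C_0/\sqrt d)^{2/3}n+o(n)$, which is $<\eps n$ once $d$ is chosen large in terms of $\eps,p,q$ and $n$ is large; this yields \cref{thm:ramsey_version}. Applying it with a slowly vanishing $\eps=\eps_n\to0$ (so $d(\eps_n)$ still grows slowly) and keeping a single colour class $G_c$ proves \cref{thm:tuple}: any $\alpha n$-set with $\alpha>0$ fixed has size $\ge\eps_n n$, hence contains $q$ vertices every $p+1$ of which have $<\eps_n n=o(n)$ common neighbours, while $e(G_c)=(p/q+o(1))\binom n2$; optimality of the density $p/q$ is exactly \cref{prop:tightness}.
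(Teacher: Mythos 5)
Your strategy is essentially the paper's: distribute vertices on a high-dimensional complex sphere, colour pairs by the arc containing $\arg\langle\bm{x},\bm{y}\rangle$, use isoperimetry/concentration to find $q$ vertices near a rotation family $\bm{v},\zeta\bm{v},\dots,\zeta^{q-1}\bm{v}$ in every linear-size set (this is \cref{coro: exist q vertices}/\cref{lem: exist q vertices}), and then bound codegrees of $(p+1)$-tuples from such a family. Your codegree step is actually a genuinely different and, where it applies, cleaner route than the paper's: intersecting $p+1$ distinct $\tfrac{2\pi}{q}$-rotates of the colour set and observing that each point of the circle lies in exactly $p$ of the $q$ rotates (so the intersection is empty before thickening) works uniformly for all $p<q$, whereas the paper's convexity/averaging argument (\cref{claim}--\cref{claim: case j<h}) needs $p/q\le 1/2$ and treats $p/q>1/2$ by passing to complements. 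The minor gaps you flag — replacing random points by the equal-measure partition of \cref{coro: complex equal measure} so that $\eps n$ vertices really correspond to a measure-$\eps$ set, and controlling $\arg$ near $\langle\bm{v},\bm{z}\rangle=0$ by excluding thin strips as in \cref{lem:area_of_band} and \cref{claim:negligible-region} — are indeed routine.

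The step you dismiss as ``routine but finicky,'' however, is a genuine gap, and it is precisely the obstacle that forces the paper's multi-sphere construction. Rotation-covariance of the palette, $A_{f}=A_0+\tfrac{2\pi f}{q}$, and negation-symmetry of each colour class, $A_f=-A_f$ (which is what a well-defined colouring of \emph{unordered} pairs on one sphere requires), are incompatible: together they force $A_0=A_0-\tfrac{4\pi}{q}$, hence for odd $q$ full rotation-invariance of $A_0$, contradicting that each rotation orbit meets $A_0$ in exactly $p$ of its $q$ points (for even $q$ it forces $p=q/2$). So you must orient each pair, and an arbitrary (say index-based) orientation destroys your codegree lemma because the $p+1$ conditions on a common neighbour $\bm{z}$ then mix rotates of $A_c$ with rotates of $-A_c$, whose intersection need not be small. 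Concretely, for $q=3$, $p=1$, $A_0=[0,\tfrac{2\pi}{3})$, $\bm{y}_1\approx\bm{v}$, $\bm{y}_2\approx\zeta^{2}\bm{v}$: if $\{\bm{y}_1,\bm{z}\}$ is tested via $\arg\langle\bm{y}_1,\bm{z}\rangle\in A_0$ but $\{\bm{y}_2,\bm{z}\}$ via $\arg\langle\bm{z},\bm{y}_2\rangle\in A_0$, then with $\theta=\arg\langle\bm{v},\bm{z}\rangle$ the conditions become $\theta\in[0,\tfrac{2\pi}{3})$ and $\theta+\tfrac{4\pi}{3}\in(\tfrac{4\pi}{3},2\pi]$, which are simultaneously satisfied on a set of measure $\tfrac13$; so this pair has $\approx n/3$ common neighbours and the conclusion fails. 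The paper's fix is to spread the vertices over $t$ copies of the sphere and define the colour only for cross pairs, using the order of the spheres as a globally consistent orientation: the $q$ vertices found by isoperimetry all lie in one $V^{(j)}$, so every candidate common neighbour in another sphere sees all of them with the same orientation (the two orientations are then the two cases \cref{claim: case j>h} and \cref{claim: case j<h}), and the within-sphere pairs, an $O(1/t)$ fraction, are coloured arbitrarily. You need to import this device (or some other globally consistent orientation) before your codegree lemma can be invoked; as written, the single-sphere construction does not yield the theorem.
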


We in fact prove a slightly stronger statement where the coloring is balanced, i.e.~each color appears roughly the same number of times, see~\cref{thm:ramsey_restated}.

Let us mention another application of \cref{thm:ramsey_version} besides \cref{thm:tuple}.
For a pair of positive integers $m>s$, we say that a graph $H$ is 
\emph{$(s,m)$-rich} if every $s$ vertices have at least $m$ common neighbors.
The following question was originally asked by Frieze and Reed (see~\cite{Kostochka2004}).
\begin{question}\label{ques:rich}
    Is there some absolute constant $\eps=\eps(s)>0$ such that the following holds?
    For every $n$-vertex graph $G$, either $G$ or its complement contains an $(s, \eps n)$-rich subgraph. 
\end{question}
Kostchka and Sudakov \cite{kostochka_sudakov_2003} actually proved that the complement of the graph in \cref{thm:pair} has similar property, thus disproving \cref{ques:rich} for $s=2$. Note that one can view the above question as asking whether every $(2,1)$-coloring of $K_n$ has large monochromatic $(2,\eps n)$-rich subgraphs.
\cref{thm:ramsey_version} 
provides a counterexample 
to a 
natural variant of this question.
\begin{corollary}
    For every $\eps>0$ and integer 
    $q\ge 2$, the following holds for sufficiently large $n$.
    There exists a  
    $(q,q-1)$-coloring of $K_n$ 
    without monochromatic 
    $(q, \eps n)$-rich subgraphs.
\end{corollary}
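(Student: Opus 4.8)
The plan is to derive the corollary directly from \cref{thm:ramsey_version} by specializing $p$ to $q-1$. Since $q\ge 2$, the pair $p:=q-1$ and $q$ satisfies $1\le p<q$, so \cref{thm:ramsey_version} applies with this $p$ and the given $\eps>0$: for all sufficiently large $n$ it produces a $(q,p)=(q,q-1)$-coloring $\varphi$ of $K_n$ such that every monochromatic subgraph $G$ of $K_n$ has no vertex subset $U\subseteq V(G)$ with $|U|\ge \eps n$ in which every $q$ vertices contain $p+1$ vertices having at least $\eps n$ common neighbors in $G$. I would claim this very coloring $\varphi$ witnesses the corollary.

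The one thing to unwind is the phrase ``every $q$ vertices in $U$ contain $p+1$ vertices with at least $\eps n$ common neighbors'' in the case $p+1=q$ at hand: a $q$-element set has a unique $q$-element subset, namely itself, so the phrase is equivalent to ``every $q$ vertices in $U$ have at least $\eps n$ common neighbors in $G$''. In other words, the conclusion of \cref{thm:ramsey_version} with $p=q-1$ says precisely that no monochromatic subgraph $G$ of $K_n$ contains a vertex subset $U$ of size at least $\eps n$ every $q$ vertices of which have at least $\eps n$ common neighbors in $G$.

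Now suppose for contradiction that some monochromatic subgraph $H$ of $K_n$ is $(q,\eps n)$-rich. We may assume $H$ has at least $q$ vertices, since otherwise the $(q,\eps n)$-rich condition is vacuous and $H$ is a trivial object not worth excluding. Picking any $q$ vertices of $H$, by richness they have at least $\eps n$ common neighbors in $H$, and these are distinct from the chosen $q$ vertices, so $|V(H)|\ge \eps n+q>\eps n$. Setting $G:=H$ and $U:=V(H)$, we obtain a monochromatic subgraph $G$ and a subset $U\subseteq V(G)$ with $|U|\ge\eps n$ such that every $q$ vertices of $U$ have at least $\eps n$ common neighbors in $G$ --- exactly what \cref{thm:ramsey_version} (with $p=q-1$) forbids. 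This contradiction shows that $\varphi$ has no monochromatic $(q,\eps n)$-rich subgraph, completing the proof.

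As this is essentially a direct corollary, there is no real obstacle; the only points needing care are the reduction $p+1=q$ (so that ``$p+1$ of the $q$ vertices'' becomes ``all $q$ vertices''), the bookkeeping that a nontrivial $(q,\eps n)$-rich graph automatically has more than $\eps n$ vertices (so the size hypothesis $|U|\ge\eps n$ of \cref{thm:ramsey_version} is met for free), and matching ``common neighbors in $G$'' with ``common neighbors in $H$'' by taking $G=H$. One could alternatively invoke the stronger balanced version \cref{thm:ramsey_restated}, but balancedness plays no role here.
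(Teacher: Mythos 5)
Your proof is correct and is essentially the argument the paper intends: the corollary is Theorem~\ref{thm:ramsey_version} specialized to $p=q-1$, where ``$p+1$ of any $q$ vertices'' collapses to ``all $q$ vertices.'' The paper leaves this deduction implicit, and your care with the edge cases (ruling out vacuously rich subgraphs, and observing that a genuine $(q,\eps n)$-rich subgraph automatically has at least $\eps n$ vertices) is exactly what is needed.
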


\paragraph{Proof sketch.}
We now sketch some of the ideas of the proof of \cref{thm:tuple}. Let us first recall the simplest $(p,q)=(1,2)$ case in \cref{thm:pair}. In this case, Kostochka and Sudakov \cite{kostochka_sudakov_2003} defined a graph $G$ on vertex set $V:=\{0, 1\}^m$ in which two vertices $\bm{x},\bm{y} \in V$ are adjacent if and only if their Hamming distance is at most $m/2$. It is easy to see that $G$ has edge density $1/2$. For any vertex subset $U$ of linear size, by isoperimetric inequality for Hamming cube, $U$ contains two nearly antipodal vectors $\bm{u}_1$ and $\bm{u}_2$, which can be shown to have $o(n)$ common neighbors. This construction seems difficult to generalize as the Hamming cube is intrinsically `binary' with a north and a south pole and can be partitioned evenly into two parts. Even for the triple case $q=3$, there is no natural way to partition the Hamming cube into three `nice' parts of almost equal size to define adjacency. To prove \cref{thm:tuple}, we work instead with high dimensional complex spheres. 

We distribute vertices `uniformly' over complex spheres and define adjacencies by their geometric relation. More specifically, we make use of the angle between two points on a complex sphere, which allows us to construct a graph of arbitrary rational edge density $p/q$ by taking edges $\bm{x}\bm{y}$ if $\arg\langle \bm{x}, \bm{y} \rangle \in [0, \frac{2\pi p}{q}]$.
This type of geometric construction is inspired by a recent generalization of Bollob\'as-Erd\H{o}s graph~\cite{Bollobas1976ramseyturan} by Liu, Reiher, Sharifzadeh and Staden~\cite{Liu2021geometric}. To show that this construction has the desired property, we first observe that every linear size vertex set must contain $q$ 
points such that every point is close to a rotation of another by a power of the primitive root $e^{2\pi i/q}$. We then use a convexity argument to show that any $p+1$ such points must have $o(n)$ common neighbors if $p/q\le 1/2$. This argument, however, fails when $p/q>1/2$ as the set $\{z \in \mathbb{C} \mid |z| \le 1, \arg z \in [0, \frac{2 \pi p}{q}]\}$ is not convex. To resolve this issue, we use a trick to reduce it to the case $\frac{p}{q} \le \frac{1}{2}$. In addition, as the inner product of complex vectors does not possess symmetry, we have to use more than one complex spheres to boost to the optimal edge density.

\paragraph{Organization.}
We collect some geometric properties of high dimensional complex spheres in \cref{sec:Preliminaries}, which will be used for our construction in \cref{sec:ramsey_version}.

\paragraph{Notation.}
We let $[n]$ denote the set $\{1,\ldots,n\}$. We use bold face lower case symbols for vectors. For a vector
$\bm{z}$ we denote by $z_j$ the value of its $j$-th coordinate. For $k\in \N$, let $\C^k$ denote the $k$-dimensional complex vector space, equipped with the standard inner product $\langle \bm{x}, \bm{y} \rangle =\sum_{j=1}^k x_j\overline{y_j}$. We write $|\bm{x}|=\sqrt{\langle \bm{x}, \bm{x} \rangle}$ for the $\ell_2$-norm of $\bm{x} \in \C^k$. 

We treat $q$ as a constant. We write $x \ll_q y$ to mean that for any $y \in  (0, 1]$ there exists $x_0 \in (0,1]$ depending on $y$ and possibly on $q$ such that for all $x \in  (0,x_0]$ the subsequent statement holds. Hierarchies with more constants are defined similarly and are to be read from the right to the left. 

\section{Preliminaries}\label{sec:Preliminaries}

In this section, we list some definitions and useful properties of inner product and high dimensional complex spheres. By definition, inner product has the following properties.

\begin{fact}\label{fact: inner product}
    For every $\bm{x},\bm{y} \in \C^k$ and $\theta \in \R$, we have
    $$\Im (\langle \bm{x}, -e^{\theta i}\bm{y} \rangle)=\Im (-e^{-\theta i}\langle \bm{x}, \bm{y} \rangle) \quad \mbox{and}\quad 
        \Im (e^{\theta i} \langle \bm{x}, \bm{y} \rangle)=-\Im (e^{-\theta i}\langle \bm{y}, \bm{x} \rangle).$$
\end{fact}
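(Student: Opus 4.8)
The plan is to verify both identities by a direct computation from the definition $\langle \bm{x}, \bm{y} \rangle = \sum_{j=1}^k x_j\overline{y_j}$, using nothing beyond conjugate-linearity of the inner product in its second argument and conjugate symmetry. In fact I expect to prove something slightly stronger than stated for the first identity: the two quantities inside the $\Im(\cdot)$ are equal as complex numbers, not merely in imaginary part.

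For the first identity, I would simply pull the scalar out of the second slot. Since $\overline{-e^{\theta i}} = -e^{-\theta i}$, conjugate-linearity gives $\langle \bm{x}, -e^{\theta i}\bm{y}\rangle = \sum_{j} x_j\,\overline{-e^{\theta i} y_j} = -e^{-\theta i}\sum_j x_j\overline{y_j} = -e^{-\theta i}\langle \bm{x},\bm{y}\rangle$. Taking imaginary parts of both sides yields $\Im(\langle \bm{x}, -e^{\theta i}\bm{y}\rangle) = \Im(-e^{-\theta i}\langle \bm{x},\bm{y}\rangle)$, which is the claim.

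For the second identity, I would use conjugate symmetry $\langle \bm{y},\bm{x}\rangle = \overline{\langle \bm{x},\bm{y}\rangle}$ together with $\overline{e^{\theta i}} = e^{-\theta i}$ to obtain $e^{-\theta i}\langle \bm{y},\bm{x}\rangle = \overline{e^{\theta i}}\cdot\overline{\langle \bm{x},\bm{y}\rangle} = \overline{e^{\theta i}\langle \bm{x},\bm{y}\rangle}$. Since $\Im(\bar z) = -\Im(z)$ for every $z\in\C$, this gives $\Im(e^{-\theta i}\langle \bm{y},\bm{x}\rangle) = -\Im(e^{\theta i}\langle \bm{x},\bm{y}\rangle)$, which rearranges to the stated equality $\Im(e^{\theta i}\langle \bm{x},\bm{y}\rangle) = -\Im(e^{-\theta i}\langle \bm{y},\bm{x}\rangle)$.

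I do not anticipate any genuine obstacle: this is a bookkeeping fact about the standard Hermitian inner product. The only point that requires care is the convention — with $\langle \bm{x},\bm{y}\rangle = \sum_j x_j\overline{y_j}$ the form is linear in the first variable and conjugate-linear in the second, so a unimodular scalar leaving the second slot (or applying $\langle \bm{y},\bm{x}\rangle = \overline{\langle \bm{x},\bm{y}\rangle}$) flips $e^{\theta i}$ to $e^{-\theta i}$, and it is exactly this phase reversal that produces both identities.
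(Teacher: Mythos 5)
Your proof is correct and is exactly the intended direct computation: the paper states this as an unproved \emph{Fact} preceded only by the remark ``By definition, inner product has the following properties,'' so there is no proof in the paper to diverge from. Your observation that the first identity actually holds as an equality of complex numbers (not just imaginary parts) is accurate, and the second identity reduces to $\Im(\bar z)=-\Im(z)$ via conjugate symmetry, just as you say.
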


For $k\in \N$, let $\textsf{S}^{k-1}(\mathbb{R}) \subset \R^k$ denote the standard $(k-1)$-dimensional real unit sphere, and
write 
\[
\textsf{S}^{k-1}(\mathbb{C})=\Big\{\bm{z}\in \mathbb{C}^k:\sum_{j=1}^k |z_j|^2=1\Big\}
\]
for the $(k-1)$-dimensional complex unit sphere. When given a unit sphere, we will write $\lambda$ for the normalized Lebesgue measure so that the unit sphere has measure $1$. We will need the following result {\cite[Lemma 3.3]{Liu2021geometric}} which is a simple consequence of the isoperimetric inequality for spheres \cite{schmidt48isoperimetric} and lower and upper bounds on the measure of spherical caps \cite{balogh2013ramseyturan,Tkocz2012AnUB}.

\begin{lemma}
\label{coro: exist q vertices}
    Let $q \ge 2$ be an integer, 
    $0<\nu \le 1/q^2$ and $A\subseteq \textsf{S}^{k-1}(\mathbb{C})$ with $\lambda(A)>qe^{-k\nu/32}$. Then there are distinct points $\bm{a}_0, \ldots, \bm{a}_{q-1}\in A$ such that 
    \[
    |\zeta^r \bm{a}_r-\zeta^{s} \bm{a}_s|<\sqrt{\nu} \quad  \text{for all $r,s \in \Z/q\Z$},
    \]
    where $\zeta:=e^{2\pi i/q}$.
\end{lemma}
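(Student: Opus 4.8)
The plan is to reinterpret the conclusion as a covering statement on the sphere and then play spherical isoperimetry off against the measures of small caps. Write $S:=\textsf{S}^{k-1}(\mathbb{C})$, which under $\mathbb{C}^k\cong\mathbb{R}^{2k}$ is just the real sphere $\textsf{S}^{2k-1}(\mathbb{R})\subset\mathbb{R}^{2k}$, and for $r\in\mathbb{Z}/q\mathbb{Z}$ set $A_r:=\zeta^r A$. Multiplication by $\zeta^r$ is unitary on $\mathbb{C}^k$, hence an orthogonal map of $\mathbb{R}^{2k}$, hence an isometry of $S$ preserving $\lambda$; so $\lambda(A_r)=\lambda(A)>qe^{-k\nu/32}$ for every $r$. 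The points demanded by the lemma are exactly $\bm{a}_r:=\zeta^{-r}\bm{b}_r$ for any $\bm{b}_r\in A_r$ ($r\in\mathbb{Z}/q\mathbb{Z}$) that are pairwise within Euclidean distance $\sqrt\nu$. To produce such $\bm b_r$ I would look for a single point $\bm x\in S$ whose open ball $B(\bm x,t)$, with $t:=\tfrac12\sqrt\nu$, meets every $A_r$; then taking $\bm b_r\in A_r\cap B(\bm x,t)$ the triangle inequality yields $|\bm b_r-\bm b_s|<2t=\sqrt\nu$. Distinctness of the $\bm a_r$ is a minor matter: after discarding a null set one may assume every point of each $A_r$ is a Lebesgue density point of it, which forces the intersections $A_r\cap B(\bm x,t)$ to have positive (hence infinite) measure, so the representatives can be chosen distinct greedily.

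The existence of $\bm x$ I would reduce to a measure estimate. Writing $N_t(Y):=\{\bm y\in S:\ |\bm y-\bm b|<t\text{ for some }\bm b\in Y\}$ for the open $t$-neighbourhood, such an $\bm x$ exists as soon as $\bigcap_{r\in\mathbb{Z}/q\mathbb{Z}}N_t(A_r)\ne\varnothing$, and since there are exactly $q$ indices, the union bound makes this hold provided $\lambda\!\left(S\setminus N_t(A_r)\right)<1/q$ for each $r$; equivalently, provided $\lambda(N_t(A_r))>1-1/q$. In fact this yields $\lambda(\bigcap_r N_t(A_r))>0$, which is what the density-point argument above needs.

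The heart of the proof is therefore to show $\lambda(N_t(A_r))>1-1/q$, and this is where the spherical isoperimetric inequality and the two-sided bounds on the measures of spherical caps cited above come in. By isoperimetry, among Borel subsets of $S$ of measure $\lambda(A_r)$ the $t$-neighbourhood of least measure is attained by a spherical cap $C$ of that measure, so $\lambda(N_t(A_r))\ge\lambda(N_t(C))$ with $\lambda(C)>qe^{-k\nu/32}$; moreover $N_t(C)$ is again a cap, and so is its complement. I would then finish with a short computation in three steps. (i) Fed into the standard upper bound for cap measures in dimension $2k$, the inequality $\lambda(C)>qe^{-k\nu/32}$ forces the angular radius of $C$ to exceed $\tfrac\pi2-c_1\sqrt\nu$ for a small absolute constant $c_1<\tfrac12$; here it is important that the hypothesis $qe^{-k\nu/32}\le\lambda(A)\le1$ already forces $k\ge 32\nu^{-1}\ln q$, i.e.\ the dimension is large, which is exactly what brings the constant below $\tfrac12$. (We may assume $C$ is at most a hemisphere, else there is nothing to prove.) (ii) Passing to the open $t$-neighbourhood enlarges the angular radius by at least $2\arcsin(t/2)\ge t=\tfrac12\sqrt\nu$, so $N_t(C)$ is a cap of angular radius exceeding $\tfrac\pi2+(\tfrac12-c_1)\sqrt\nu$ — strictly bigger than a hemisphere, by a margin of order $\sqrt\nu$. (iii) Hence $S\setminus N_t(C)$ is a cap of angular radius at most $\tfrac\pi2-(\tfrac12-c_1)\sqrt\nu$, and the standard upper bound on cap measures gives $\lambda(S\setminus N_t(C))\le\tfrac12 e^{-c_2 k\nu}$ for an absolute $c_2>0$; combining with $k\nu\ge 32\ln q$ and tracking constants so that $c_2\ge\tfrac{1}{32}$, this is at most $\tfrac12 q^{-1}<1/q$, as required.

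The one genuinely delicate point — and the reason the threshold is written as $qe^{-k\nu/32}$ and paired with the neighbourhood radius $\tfrac12\sqrt\nu$ — is the calibration in (i)–(iii): one must ensure that the angular gain $\tfrac12\sqrt\nu$ from the $t$-neighbourhood strictly dominates the angular defect $c_1\sqrt\nu$ inherited from the lower bound on $\lambda(C)$, and that the resulting super-hemispherical cap has complement of measure genuinely below $1/q$ (not merely below $\tfrac12$); this is what consumes the slack in the exponent $1/32$ and relies on $k$ being forced large. Everything else — the union bound, the triangle inequality, and the conversion between Euclidean chords and angular radii of caps — is routine, and I would quote the spherical isoperimetric inequality and the spherical-cap estimates from the references listed just before the statement.
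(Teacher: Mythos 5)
The paper does not actually prove \cref{coro: exist q vertices}: it quotes it from \cite[Lemma~3.3]{Liu2021geometric} and only indicates that it ``is a simple consequence of the isoperimetric inequality for spheres \cite{schmidt48isoperimetric} and lower and upper bounds on the measure of spherical caps \cite{balogh2013ramseyturan,Tkocz2012AnUB}.'' Your argument is exactly a derivation along those indicated lines, and its structure is the natural (and, as far as I can tell, the intended) one: rotate $A$ to get $A_r=\zeta^rA$, note these have the same measure as multiplication by $\zeta^r$ is unitary, use spherical isoperimetry to replace each $A_r$ by a cap $C$ of the same measure, use the cap estimate to show $\lambda\bigl(S\setminus N_t(C)\bigr)<1/q$ for $t=\tfrac12\sqrt\nu$, then a union bound over the $q$ rotations produces a point $\bm x$ with $B(\bm x,t)\cap A_r\ne\varnothing$ for all $r$, and pulling back by $\zeta^{-r}$ gives the desired $\bm a_r\in A$. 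The calibration (your steps (i)--(iii)) checks out: the cap upper bound in dimension $2k$ together with $\lambda(C)>e^{-k\nu/32}$ forces the angular radius of $C$ to be at least $\tfrac\pi2-c_1\sqrt\nu$ with $c_1<\tfrac12$ (I get roughly $c_1\le\tfrac{\pi}{8\sqrt2}\approx 0.28$ after passing from $\cos\theta<\sqrt{\nu/32}$ to the angle), the $t$-neighbourhood gains angular radius at least $t=\tfrac12\sqrt\nu$, and the complement cap of angular radius $\tfrac\pi2-(\tfrac12-c_1)\sqrt\nu$ has measure at most $\tfrac12e^{-c_2k\nu}$ with $c_2\ge(\tfrac12-c_1)^2\ge\tfrac1{32}$; combined with $k\nu\ge32\ln q$ (forced by $qe^{-k\nu/32}<\lambda(A)\le1$) this is at most $\tfrac1{2q}<1/q$. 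Your handling of distinctness via density points is also fine, though it can be done more elementarily: $A_r\cap B(\bm x,t)$ is nonempty and open-in-$A_r$ intersections of positive measure can be arranged simply by shrinking $t$ slightly, or by noting that for almost every $\bm x$ in the positive-measure set $\bigcap_rN_t(A_r)$ the intersections $A_r\cap B(\bm x,t)$ have positive measure.

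Two small remarks. First, the parenthetical in your step (i) attributing $c_1<\tfrac12$ to ``the dimension being large'' is a red herring: $c_1$ is an absolute constant coming from the cap-measure upper bound and the conversion $\cos\theta\mapsto\tfrac\pi2-\theta$, and does not depend on $k$; what the hypothesis $k\nu\ge32\ln q$ buys you is only the final inequality $\tfrac12e^{-c_2k\nu}\le\tfrac1{2q}$. Second, your derivation uses only the \emph{upper} bound on cap measures (in steps (i) and (iii)); the paper's mention of lower bounds is not needed in your route, which is a harmless and in fact streamlined deviation. Overall the argument is correct and matches the route the paper points to.
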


We also require the following folklore result which upper-bounds the measure of spherical strips. Its proof can be extracted from that of
\cite[Claim 3.6]{Liu2021geometric}. For completeness, we include its proof. 

\begin{lemma}
\label{lem:area_of_band}
For all $k\ge 3, \nu>0$ and $\bm{x} \in \textsf{S}^{k-1}(\mathbb{C})$, the measure of the strip 
\[
\Big\{\bm{y} \in \textsf{S}^{k-1}(\mathbb{C})\colon |\mathrm{Im}\langle \bm{x}, \bm{y} \rangle|\le \frac{\nu}{\sqrt{2k}}\Big\}
\]
is at most $3\nu$.
\end{lemma}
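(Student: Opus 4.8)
The plan is to reduce the statement to a standard estimate for the measure of an equatorial band on a real sphere. Identify $\mathbb{C}^k$ with $\mathbb{R}^{2k}$ by sending $\bm{z}=\bm{a}+i\bm{b}$ (with $\bm{a},\bm{b}\in\mathbb{R}^k$) to $(\bm{a},\bm{b})$. This is a linear isometry carrying $\textsf{S}^{k-1}(\mathbb{C})$ onto the real unit sphere $\textsf{S}^{2k-1}(\mathbb{R})\subseteq\mathbb{R}^{2k}$, and in particular it preserves the normalized Lebesgue measure $\lambda$. Now fix $\bm{x}\in\textsf{S}^{k-1}(\mathbb{C})$. Since $\langle -i\bm{x},\bm{y}\rangle=-i\langle\bm{x},\bm{y}\rangle$, we have $\mathrm{Re}\langle -i\bm{x},\bm{y}\rangle=\mathrm{Im}\langle\bm{x},\bm{y}\rangle$; and the real part of the Hermitian inner product on $\mathbb{C}^k$ is exactly the Euclidean inner product of the associated vectors in $\mathbb{R}^{2k}$. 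Hence $\mathrm{Im}\langle\bm{x},\bm{y}\rangle=\langle\bm{w},\widetilde{\bm{y}}\rangle$, where $\widetilde{\bm{y}}$ is the image of $\bm{y}$ and $\bm{w}$ is the image of $-i\bm{x}$, a unit vector because $|-i\bm{x}|=|\bm{x}|=1$. Therefore the strip in the lemma corresponds exactly to the band $\{\bm{v}\in\textsf{S}^{2k-1}(\mathbb{R}):|\langle\bm{w},\bm{v}\rangle|\le t\}$ of half-width $t:=\nu/\sqrt{2k}$ around the pole $\bm{w}$, and it suffices to bound its $\lambda$-measure by $3\nu$. We may assume $t<1$, since otherwise the band is the whole sphere and $3\nu\ge 3\sqrt{2k}>1$.

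Next I would invoke the well-known form of the push-forward of $\lambda$ under the height map $\bm{v}\mapsto\langle\bm{w},\bm{v}\rangle$ on $\textsf{S}^{d}(\mathbb{R})\subseteq\mathbb{R}^{d+1}$: it has density $f(s)=c_d(1-s^2)^{(d-2)/2}$ on $[-1,1]$, where $c_d=\left(\int_{-1}^{1}(1-s^2)^{(d-2)/2}\,ds\right)^{-1}=\Gamma((d+1)/2)/(\sqrt{\pi}\,\Gamma(d/2))$. (This is obtained by slicing the sphere by hyperplanes orthogonal to $\bm{w}$; it is also implicit in the cap estimates of \cite{balogh2013ramseyturan,Tkocz2012AnUB}.) Taking $d=2k-1\ge 5$, the exponent $(d-2)/2$ is positive, so $f(s)\le c_d$ for all $s\in[-1,1]$, and hence the measure of the band is $\int_{-t}^{t}f(s)\,ds\le 2c_d t$.

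Finally I would bound $c_d$ by a crude Gamma-function estimate: log-convexity of $\Gamma$ gives $\Gamma((d+1)/2)^{2}\le\Gamma(d/2)\,\Gamma((d+2)/2)=(d/2)\,\Gamma(d/2)^{2}$, so $c_d\le\sqrt{d/(2\pi)}\le\sqrt{2k/(2\pi)}=\sqrt{k/\pi}$. Substituting $t=\nu/\sqrt{2k}$ yields $2c_d t\le 2\sqrt{k/\pi}\cdot\nu/\sqrt{2k}=\nu\sqrt{2/\pi}<3\nu$, as required. There is no genuine obstacle here; the only steps requiring care are the verification that the real encoding turns $\mathrm{Im}\langle\bm{x},\bm{y}\rangle$ into an inner product against a fixed unit vector, and recalling the precise density and normalizing constant of the height coordinate on a sphere — after which the claimed bound follows with room to spare. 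As an alternative, one could write the band as the sphere minus two antipodal spherical caps and quote the cap-measure upper bound from \cite{balogh2013ramseyturan,Tkocz2012AnUB} directly.
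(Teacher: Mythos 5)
Your proof is correct, but it takes a genuinely different route from the paper's. The paper's argument writes the strip as the complement of two antipodal spherical caps and invokes a known \emph{lower} bound on the measure of a spherical cap (cited from \cite{Liu2021geometric}) to conclude that the strip has measure at most $1-2\bigl(\tfrac12-\sqrt{2}\nu\bigr)=2\sqrt{2}\nu<3\nu$. You instead push $\lambda$ forward under the height map, use the exact Beta-type density $c_d(1-s^2)^{(d-2)/2}$ of $\langle\bm{w},\bm{v}\rangle$ on $\textsf{S}^{2k-1}(\mathbb{R})$, bound it by its value at $0$ (legitimate since $d=2k-1\ge5$ makes the exponent positive), and then control the normalizing constant $c_d\le\sqrt{d/(2\pi)}$ via log-convexity of $\Gamma$. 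Both arguments are sound; the paper's is shorter once the cap estimate is available as a black box, while yours is self-contained, gives the stronger constant $\sqrt{2/\pi}\approx0.80$ in place of $2\sqrt{2}\approx2.83$, and makes the dimension dependence transparent. One small slip: in your closing aside you say one could quote the cap-measure \emph{upper} bound from \cite{balogh2013ramseyturan,Tkocz2012AnUB} to bound the band; since the band is the complement of two caps, it is a \emph{lower} bound on the cap measure that is needed, exactly as in the paper's proof. This is only a remark and does not affect your main argument. Your reduction steps (the isometry $\mathbb{C}^k\cong\mathbb{R}^{2k}$, the identity $\mathrm{Im}\langle\bm{x},\bm{y}\rangle=\mathrm{Re}\langle-i\bm{x},\bm{y}\rangle$, and the observation that $-i\bm{x}$ maps to a real unit vector) are all correct, and the trivial case $t\ge1$ is handled properly using $k\ge3$.
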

\begin{proof}
The statement is obviously true for $\nu>1/3$; so we assume that $\nu \le 1/3$. Define a spherical cap
\[
C:=\left\{\bm{y} \in \textsf{S}^{k-1}(\C)\colon |-\bm{x}i-\bm{y}|\le \sqrt{2}-\nu/\sqrt{2k}\right\}.
\]
By the known lower bound on the measure of spherical caps (see e.g. \cite[Lemma 2.1]{Liu2021geometric}), we have $\lambda(C)\ge \frac12-\sqrt{2}\nu$. For any $\bm{y} \in C$, using
$\nu/\sqrt{2k} \le 1/3$, we obtain
\[
2-2\,\mathrm{Im}\langle \bm{x}, \bm{y} \rangle=2-2\,\mathrm{Re}\langle -\bm{x}i, \bm{y} \rangle
=|-\bm{x}i-\bm{y}|^2 \le (\sqrt{2}-\nu/\sqrt{2k})^2<2-2\nu/\sqrt{2k},
\]
and so $\mathrm{Im}\langle \bm{x}, \bm{y} \rangle> \nu/\sqrt{2k}$. Thus, by symmetry, the measure of the strip is at most 
$1-2\lambda(C) \le 1-2\left(\frac12-\sqrt{2}\nu\right)=2\sqrt{2}\nu$.
\end{proof}

It is well-known that the real unit sphere can be partitioned into small pieces of equal measure (see e.g. \cite[Lemma 21]{feige2002optimality}). As the map
\[
\varphi\colon (x_1+iy_1,\ldots, x_k+iy_k)\mapsto (x_1, y_1, \ldots, x_k,y_k)
\]
from $\textsf{S}^{k-1}(\mathbb{C})$ to $\textsf{S}^{2k-1}(\mathbb{R})$ is an invertible isometry, we get the following lemma.

\begin{lemma}\label{coro: complex equal measure}
    There exists $C>0$ such that the following holds. Let $0<\nu<1$ and $n\ge (C/\nu)^{2k}$. Then $\textsf{S}^{k-1}(\mathbb{C})$ can be partitioned into $n$ pieces of equal measure, each of diameter at most $\nu$.
\end{lemma}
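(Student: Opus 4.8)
The plan is to transport a known equal-measure partition of the \emph{real} sphere across the isometry $\varphi$ introduced above. First I would record the folklore fact underlying \cite[Lemma 21]{feige2002optimality}: there is an absolute constant $C_0>0$ such that for every integer $m\ge 2$, every $0<\delta<1$, and every integer $N\ge (C_0/\delta)^{m-1}$, the sphere $\textsf{S}^{m-1}(\R)$ can be partitioned into $N$ measurable pieces, each of normalized Lebesgue measure $1/N$ and Euclidean diameter at most $\delta$. If one prefers to start only from the weaker assertion that \emph{some} count $N$ of order $(C_0/\delta)^{m-1}$ works, the upgrade to an arbitrary larger $N$ is routine: take such a partition, repeatedly subdivide individual pieces (diameters only shrink) until the desired count is reached, and then move measure-zero boundary sets between adjacent pieces to equalize the measures; the cost is absorbed into a slightly larger constant, which is precisely why the hypothesis of the lemma carries the exponent $2k$ rather than $2k-1$.

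Next I would apply this with $m=2k$ and $\delta=\nu$. Taking $C\ge\max(C_0,1)$, the hypothesis $n\ge (C/\nu)^{2k}$ forces $n\ge (C_0/\nu)^{2k-1}$ (using $\nu<1$), so we obtain a partition $\textsf{S}^{2k-1}(\R)=P_1\sqcup\cdots\sqcup P_n$ with $\lambda(P_i)=1/n$ and $\operatorname{diam}(P_i)\le\nu$ for every $i\in[n]$.

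Finally I would pull this partition back through $\varphi$. Since $\varphi$ merely relabels the real and imaginary parts of a complex vector as $2k$ real coordinates, it is a linear bijection $\C^k\to\R^{2k}$ with $|\bm z-\bm w|=|\varphi(\bm z)-\varphi(\bm w)|$ for all $\bm z,\bm w$, hence an invertible isometry carrying $\textsf{S}^{k-1}(\C)$ bijectively onto $\textsf{S}^{2k-1}(\R)$ (as already noted in the text). Put $Q_i:=\varphi^{-1}(P_i)$. Then $Q_1,\ldots,Q_n$ partition $\textsf{S}^{k-1}(\C)$; since any isometry of a round sphere preserves its normalized Lebesgue measure, $\lambda(Q_i)=\lambda(P_i)=1/n$, and since $\varphi^{-1}$ is an isometry, $\operatorname{diam}(Q_i)=\operatorname{diam}(P_i)\le\nu$. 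This family $\{Q_i\}_{i\in[n]}$ is the required partition.

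I do not foresee a serious obstacle here: the isometry property of $\varphi$, the invariance of the round measure under isometries, and the preservation of diameters under $\varphi^{-1}$ are all immediate. The only mildly delicate point is the reduction, internal to the cited real-sphere statement, from ``an equal-measure small-diameter partition exists for some count of the right order'' to ``one exists for every count above the threshold'', and this is handled cleanly by the slack built into the constant $C$.
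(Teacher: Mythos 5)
Your approach is exactly the paper's: the paper obtains this lemma precisely by citing the equal-measure small-diameter partition of the real sphere (Feige--Schechtman, Lemma 21) and transporting it through the isometry $\varphi$, and your write-up simply makes the transport step explicit (measures and diameters are preserved because $\varphi$ is a measure-preserving isometry), which is correct.

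One caveat on the aside you offer as a fallback: the claim that you can ``repeatedly subdivide individual pieces \ldots and then move measure-zero boundary sets between adjacent pieces to equalize the measures'' does not work as written. Moving a set of measure zero cannot change the measures of the pieces; to equalize you must transfer \emph{positive}-measure regions between cells, and doing so globally while keeping every cell's diameter below $\nu$ is essentially the nontrivial content of the cited lemma, not a cosmetic fix. (Sweeping a hyperplane through a single cell lets you split that one cell into sub-pieces of any prescribed measures, but it does not let you reconcile the ``leftover'' fragments from different cells, which may be far apart.) So it is better to take, as your primary reading does, the version of the folklore statement that already allows every $N$ above the threshold $(C_0/\delta)^{m-1}$; with that as the cited input, the rest of your argument, including the exponent bookkeeping $n\ge(C/\nu)^{2k}\ge(C_0/\nu)^{2k-1}$ for $\nu<1$ and $C\ge C_0$, is fine.
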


\section{Proof of Theorem~\ref{thm:ramsey_version}}\label{sec:ramsey_version}

We can view a $(q, p)$-coloring of the complete graph $K_N$ as a collection of $q$ graphs on a common vertex set of size $N$ such that each pair of vertices is an edge of exactly $p$ graphs. A monochromatic subgraph of $K_N$ is then simply a subgraph of one of these $q$ graphs.  Theorem~\ref{thm:ramsey_version} is thus equivalent to the following statement.

\begin{theorem}\label{thm:ramsey_restated}
    Let $p$ and $q$ be positive integers with $p<q$, and let $\eps>0$. Then for sufficiently large $N$, there is a collection of $q$ graphs $G_0,\ldots,G_{q-1}$ on a common vertex set $V$ of size $N$ such that each pair of vertices is an edge of exactly $p$ graphs and the following hold for each $f\in \Z/q\Z$:
    \begin{itemize}
        \item[(i)] $G_f$ has edge density $p/q\pm \eps$; 
        \item[(ii)] $G_f$ does not have a vertex subset $U$ of size at least $\eps  N$ such that among any $q$ vertices in $U$ there are $p+1$ vertices with at least $\eps  N$ common neighbors in $G_f$.
    \end{itemize}
\end{theorem}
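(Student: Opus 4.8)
\emph{Overview of the construction.} The plan is to place the $N$ vertices as evenly as possible on one or more high‑dimensional complex unit spheres and to let adjacency in $G_f$ be governed by the argument of the inner product of the corresponding points. Fix $\nu$ with $\nu\ll_q\eps$ and then $N$ large. Using \cref{coro: complex equal measure}, split $\textsf{S}^{k-1}(\C)$ (for a bounded $k$, and, as discussed below, a constant number of independent copies of it) into $N$ cells of equal measure and diameter at most $\nu$, and pick one representative $\bm{x}_v$ per cell, in sufficiently general position that no two representatives are orthogonal and no $\arg\langle\bm{x}_u,\bm{x}_v\rangle$ is a multiple of $2\pi/q$. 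Partition the circle $\R/2\pi\Z$ into consecutive arcs $S_0,\dots,S_{q-1}$ of length $2\pi/q$, put $A_f=\bigcup_{j=f}^{f+p-1}S_j$ with indices mod $q$, and declare, as a first approximation, that $uv\in G_f$ whenever $\arg\langle\bm{x}_u,\bm{x}_v\rangle\in A_f$. A generic point of the circle lies in exactly $p$ of the arcs $A_f$, so every pair is an edge of exactly $p$ of the $G_f$; and $\arg\langle\bm{x}_u,\bm{x}_v\rangle$ is essentially equidistributed on the circle when $\bm{x}_u,\bm{x}_v$ are close to uniform, so each $G_f$ has density $p/q\pm\eps$, which one makes hold simultaneously (hence also balanced) by a routine concentration argument over the random choice of representatives. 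The one genuine issue is that $\arg\langle\bm{x}_u,\bm{x}_v\rangle=-\arg\langle\bm{x}_v,\bm{x}_u\rangle$, so the relation above is not symmetric in $u$ and $v$; since the complex inner product lacks the symmetry that the Hamming distance enjoys in \cref{thm:pair}, I would fix this by using more than one copy of the sphere and amalgamating the arguments coming from the several copies into a genuinely symmetric relation that still gives a balanced proper $(q,p)$-coloring of density $p/q$.

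\emph{No large rich subset, first for $p/q\le 1/2$.} Fix $f$ and suppose $U\subseteq V$ has $|U|\ge\eps N$. The cells of the vertices of $U$ have total measure $|U|/N\ge\eps>qe^{-k\nu/32}$, so \cref{coro: exist q vertices} produces distinct vertices $\bm{a}_0,\dots,\bm{a}_{q-1}$ in $U$ such that, with $\zeta=e^{2\pi i/q}$, all the $\zeta^r\bm{a}_r$ lie within $\sqrt\nu$ of each other; thus $\bm{a}_r\approx\zeta^{-r}\bm{c}$ for $\bm{c}:=\bm{a}_0$ and all $r$. These are our $q$ vertices. Pick any $p+1$ of them, $\bm{a}_{r_0},\dots,\bm{a}_{r_p}$ with distinct $r_j\in\Z/q\Z$, and let $w$ be a common neighbour of all of them in $G_f$. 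Since $\langle\bm{x}_w,\bm{a}_{r_j}\rangle\approx\zeta^{r_j}\langle\bm{x}_w,\bm{c}\rangle$, the angle $\arg\langle\bm{x}_w,\bm{a}_{r_j}\rangle$ is within $O(\sqrt\nu/|\langle\bm{x}_w,\bm{c}\rangle|)$ of $\beta+2\pi r_j/q$, where $\beta:=\arg\langle\bm{x}_w,\bm{c}\rangle$. Hence, unless $|\langle\bm{x}_w,\bm{c}\rangle|$ is tiny or $\beta$ is close to one of the finitely many endpoints of the sets $A_f-2\pi r_j/q$, we must have $\beta\in\bigcap_{j=0}^p(A_f-2\pi r_j/q)$. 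But $A_f-2\pi r_j/q$ consists of the $p$ consecutive slots $S_m$ with $m\in\{f-r_j,\dots,f-r_j+p-1\}$, and a single slot $S_m$ lies in all $p+1$ of these sets precisely when the $p+1$ distinct residues $r_j$ fit inside one window of $p$ consecutive residues mod $q$ — impossible. So every common neighbour of $p+1$ of the $\bm{a}_r$ in $G_f$ lies in the exceptional set just described, whose measure is $O(\sqrt\nu)$ by \cref{lem:area_of_band} (for the ``$\beta$ near an endpoint'' part) together with the elementary estimate $\lambda(\{\bm{x}:|\langle\bm{x},\bm{c}\rangle|<\delta\})=O(\delta^2)$ (for the ``$|\langle\bm{x}_w,\bm{c}\rangle|$ tiny'' part). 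Taking $\nu$ small enough makes this at most $\eps N$, which is exactly the failure of the rich property for $G_f$.

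\emph{The case $p/q>1/2$, and where the difficulty lies.} The argument just given uses, implicitly, that the adjacency region $\{z:|z|\le 1,\ \arg z\in A_f\}$ is convex, so that the $\sqrt\nu$-perturbation of $\bm{a}_{r_j}$ and the cell diameter can be confined to a thin strip bounded by \cref{lem:area_of_band}; this holds precisely for $p/q\le 1/2$. For $p/q>1/2$ I would reduce to that regime by complementation: a common neighbour of $\bm{a}_{r_0},\dots,\bm{a}_{r_p}$ in $G_f$ is exactly a vertex lying in none of their $\overline{G_f}$-neighbourhoods, and $\overline{G_f}$ is, up to the same perturbations, the construction for the pair $(q-p,q)$ with $(q-p)/q<1/2$, so each such neighbourhood is a convex slice; the identical slot-window dichotomy (now the $p+1$ translates of the $(q-p)$-slot arc $\overline{A_f}$ together miss no slot, since the $p+1$ translates of the $p$-slot arc $A_f$ have empty intersection) shows that these $p+1$ slices cover all but an $O(\sqrt\nu)$-fraction of the sphere, leaving at most $\eps N$ common neighbours. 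I expect the crux of the proof to be not either of these two arguments but the construction itself: arranging the multi-sphere adjacency relation so that symmetry of each $G_f$, the exact ``each pair in $p$ colours'' property, and density $p/q\pm\eps$ all hold at once, while keeping the relation transparent enough that the rotation identity $\bm{a}_r\approx\zeta^{-r}\bm{c}$ still collapses to the slot-window obstruction above.
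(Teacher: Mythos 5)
There is a genuine gap at the heart of your ``no large rich subset'' argument, and it is a quantitative one that cannot be patched by tuning parameters. Your key step asserts that $\arg\langle\bm{x}_w,\bm{a}_{r_j}\rangle$ is within $O(\sqrt{\nu}/|\langle\bm{x}_w,\bm{c}\rangle|)$ of $\beta+2\pi r_j/q$. On a high-dimensional complex sphere, concentration of measure forces $|\langle\bm{x}_w,\bm{c}\rangle|=\Theta(1/\sqrt{k})$ for all but a vanishing fraction of $\bm{x}_w$ (your claimed bound $\lambda(\{\bm{x}:|\langle\bm{x},\bm{c}\rangle|<\delta\})=O(\delta^2)$ is false for $k$ large: this measure is $1-(1-\delta^2)^{k-1}\approx k\delta^2$, and is close to $1$ once $\delta\gg 1/\sqrt{k}$). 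Meanwhile, \cref{coro: exist q vertices} requires $\eps>qe^{-k\nu/32}$, i.e.\ $\nu\gtrsim\log(q/\eps)/k$, so the perturbation $|\bm{a}_{r_j}-\zeta^{-r_j}\bm{c}|\approx\sqrt{\nu}\gtrsim\sqrt{\log(q/\eps)/k}$ is always much larger than $1/\sqrt{k}$. Hence $\langle\bm{x}_w,\bm{a}_{r_j}-\zeta^{-r_j}\bm{c}\rangle$ can swamp $\langle\bm{x}_w,\zeta^{-r_j}\bm{c}\rangle$ entirely, and the argument of $\langle\bm{x}_w,\bm{a}_{r_j}\rangle$ is not controlled at all; your angular error $O(\sqrt{\nu k})\gg 1$ is vacuous. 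The slot-window combinatorics (the $p+1$ translates of $A_f$ having empty intersection) is correct and does mirror a pigeonhole step in the paper, but the analytic bridge from ``$w$ is a common neighbour'' to ``$\beta$ lies in the intersection'' collapses.

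The paper gets around exactly this obstacle by never trying to locate $\arg\langle\bm{x}_w,\bm{a}_{r_j}\rangle$ for an individual common neighbour $w$. Instead it (a) restricts to common neighbours $\bm{w}$ with $|\Im(\zeta^s\langle\bm{w},\bm{u}\rangle)|\ge 3\mu$ for all $s$, where $\mu=\eta/\sqrt{2k}$ matches the natural inner-product scale (this only discards an $O(q\eta)$-fraction by \cref{lem:area_of_band}); and (b) applies the isoperimetric lemma a \emph{second} time inside the common neighbourhood to extract $q$ common neighbours $\bm{w}_0,\dots,\bm{w}_{q-1}$ with $\bm{w}_s\approx\zeta^{-s}\bm{w}_0$, so that $|\sum_s\bm{w}_s|=O(\sqrt{\mu})$ because $\sum_s\zeta^s=0$. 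Cauchy--Schwarz then bounds $|\langle\frac1q\sum_s\bm{w}_s,\bm{u}-\zeta^{\ell}\bm{u}'\rangle|$ by a product of two $O(\sqrt{\mu})$ quantities, i.e.\ $O(\mu)$, while the sign-coherence of the imaginary parts from (a) forces this same quantity to be $\ge 6\mu$ --- a contradiction. This averaging is the missing idea; without it (or something equivalent), the perturbation $\sqrt{\nu}\gg 1/\sqrt{k}$ cannot be beaten. Two smaller points: the asymmetry of the complex inner product is resolved in the paper simply by using $t$ spheres and defining adjacency of a cross pair via the inner product taken in a fixed order (lower-indexed sphere first), with within-sphere pairs assigned arbitrarily as a negligible fraction --- no ``amalgamation'' is needed; and the $p/q>1/2$ case is handled by a degree count showing almost every vertex has exactly $p$ neighbours among the $q$ special vertices in $\overline{G}_f$, rather than by a covering argument.
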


In this section, we will prove this version of Theorem~\ref{thm:ramsey_version}.

\subsection{The case when \texorpdfstring{$\frac{p}{q} \le \frac{1}{2}$}{sparse}}\label{sec:3.1}
\paragraph{Construction 1.} We will construct $q$ graphs $G_0, \ldots, G_{q-1}$ on a common vertex set $V$ such that each pair of vertices is an edge of exactly $p$ graphs. Let\footnote{For concreteness, one can take $\eta=\frac{\eps}{132q}, t=\frac{5q}{\eps}$ and $k=10^7(\frac{q}{\eps})^3$.} 
$$0< 1/k \ll_q 1/t, \eta \ll_q \eps, \quad \mu:=\eta/\sqrt{2k},  \quad \zeta:=e^{2\pi i/q}  \quad \mbox{and}\quad n\ge (2C_{\ref{coro: complex equal measure}}/\mu)^{2k}.$$ 
Let $S_1, \ldots, S_t$ be $t$ copies of the sphere $\textsf{S}^{k-1}(\mathbb{C})$. We define an inner product of two vectors in different spheres by identifying them into a single sphere.
According to Lemma~\ref{coro: complex equal measure}, one can partition each $S_j$, $j\in[t]$, into $n$ domains $D_1^{(j)}, D_2^{(j)}, \ldots, D_{n}^{(j)}$ of equal measure $1/n$ and diameter at most $\mu/2$. Select a vector $\bm{v}_m^{(j)}$ from each $D_m^{(j)}$ so that $\langle \bm{v}_r^{(h)}, \bm{v}_s^{(j)}\rangle \neq 0$ for all $r,s \in [n]$ and $h,j\in [t]$. Let $V^{(j)}=\{\bm{v}_1^{(j)},\ldots, \bm{v}_n^{(j)}\}$ for $j\in [t]$, and $V = \bigcup_{j \in [t]} V^{(j)}$ with $|V|=N:=tn$. 

We now define $q$ graphs $G_0, \ldots, G_{q-1}$ on vertex set $V$ as follows. Given $f\in \Z/q\Z$, a cross pair $\bm{v}_r^{(h)}, \bm{v}_s^{(j)}$ with $r,s\in[n]$ and $1\le h<j\le t$ forms an edge of $G_f$ if and only if there exists $\alpha\in \left[\frac{2\pi f}{q}, \frac{2\pi (f+p)}{q}\right)$ such that
\[
e^{-i\alpha}\langle \bm{v}_r^{(h)}, \bm{v}_s^{(j)}\rangle \in [0,1].
\]
To finish the construction, for $r,s\in[n]$ and $j\in[t]$, arbitrarily assign each pair $v_r^{(j)}v_s^{(j)}$ to $p$ graphs. Notice that this construction yields an $N$-vertex graph for all large $N$ divisible by $t$. To get it work for all sufficiently large $N$, simply put $\lceil N/t\rceil$ or $\lfloor N/t\rfloor$ vertices in each $V^{(j)}$, $j\in [t]$, above and all the following arguments go through. For simplicity of presentation, we assume that $t\mid N$.

\medskip

It is clear by the construction that each pair of vertices in $V$ is an edge of exactly $p$ graphs. Note also that for each $j\in[t]$, all bipartite graphs $G_f[V^{(j)}, V\setminus V^{(j)}]$, $f\in \Z/q\Z$, are isomorphic. Indeed, the following map is an isomorphism between $G_f[V^{(j)}, V\setminus V^{(j)}]$ and $G_0[V^{(j)}, V\setminus V^{(j)}]$:
\begin{equation}\label{eq:iso}
    g(\bm{x}) = 
    \begin{cases}
    \zeta^{f}\bm{x}, & \text{if } \bm{x}\in S_h, h<j, \\
    \bm{x}, & \text{if } \bm{x}\in S_j, \\
    \zeta^{-f}\bm{x}, & \text{if } \bm{x}\in S_h, h>j.
  \end{cases}
\end{equation}
As the proportion of edges inside some $V^{(j)}$ is $o_t(1)$ and $\sum_{f=0}^{q-1}e(G_f)=p{N\choose 2}$, we can then infer that each $G_f$ has edge density $p/q+o(1)$. It remains to show that for each $f\in \Z/q\Z$, $G_f$ satisfies part (ii) of \cref{thm:ramsey_restated}.  Our proof relies on \cref{lem: exist q vertices,lem:co-degree-1} stated below.

\begin{proof}[Proof of \cref{thm:ramsey_restated} for $\frac{p}{q}\le \frac{1}{2}$]
Suppose to the contrary that there exist $f\in \Z/q\Z$ and a vertex subset $U\subseteq  V$ of size $|U|\ge \eps  tn$ such that among any $q$ vertices in $U$ there are $p+1$ vertices with at least $\eps  tn$ common neighbors in $G_f$. According to \cref{lem: exist q vertices} below, there are $j\in [t]$ and distinct vertices $\bm{u}_1, \ldots,\bm{u}_q\in U\cap V^{(j)}$ with $|\bm{u}_{r}-\zeta^{s-r} \bm{u}_{s}|\le 2\sqrt{\mu}$ for all $r,s$. By applying the map $g$ in~\eqref{eq:iso}, we may assume that $f=0$. Then by \cref{lem:co-degree-1} we find that any $p+1$ vertices in $\{\bm{u}_1, \ldots,\bm{u}_q\}$ have less than $\eps  tn$ common neighbors in $G_0$, a contradiction.
\end{proof}

In the rest of this subsection, we present the results needed in the above proof.

\begin{lemma}\label{lem: exist q vertices}
For any vertex set $U\subseteq V$ of size $|U|\ge \eps tn/2$, there exist $j\in [t]$ and distinct vertices $\bm{u}_1, \ldots,\bm{u}_q\in U\cap V^{(j)}$ satisfying
\begin{equation}\label{eq:key}
 |\bm{u}_{r}-\zeta^{s-r} \bm{u}_{s}|< 2\sqrt{\mu} \quad \text{for all $r,s\in [q]$.}  
\end{equation}
\end{lemma}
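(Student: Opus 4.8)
The plan is to reduce this to Lemma~\ref{coro: exist q vertices} by pigeonholing $U$ onto one of the $t$ spheres and then invoking the isoperimetric input. First I would observe that since $U \subseteq V = \bigcup_{j\in[t]}V^{(j)}$ and $|U| \ge \eps tn/2$, there is some index $j \in [t]$ with $|U \cap V^{(j)}| \ge \eps n/2$; fix this $j$. Writing $A_0 \subseteq S_j = \textsf{S}^{k-1}(\mathbb{C})$ for the union of the domains $D_m^{(j)}$ whose representative vertex $\bm{v}_m^{(j)}$ lies in $U \cap V^{(j)}$, we have $\lambda(A_0) = |U\cap V^{(j)}|/n \ge \eps/2$, since each domain has measure $1/n$. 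The idea is that each point of $A_0$ is within $\mu/2$ (the domain diameter) of the corresponding chosen vertex $\bm{v}_m^{(j)} \in U$, so any configuration Lemma~\ref{coro: exist q vertices} finds inside $A_0$ can be transferred to the actual vertices of $U \cap V^{(j)}$ at the cost of a small additive error.

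Next I would apply Lemma~\ref{coro: exist q vertices} to $A_0$ with the parameter $\nu := \mu$ (note $\mu = \eta/\sqrt{2k}$, so $\mu \le \eta \ll_q \eps \le 1$, and one should check $\mu \le 1/q^2$, which follows from the hierarchy $1/k \ll_q 1/t,\eta \ll_q \eps$ and $q$ being constant; also $\lambda(A_0) \ge \eps/2 > q e^{-k\mu/32}$ holds for $k$ large since $k\mu = k\eta/\sqrt{2k} = \eta\sqrt{k/2} \to \infty$). This yields distinct points $\bm{a}_0,\ldots,\bm{a}_{q-1} \in A_0$ with $|\zeta^r \bm{a}_r - \zeta^s \bm{a}_s| < \sqrt{\mu}$ for all $r,s \in \Z/q\Z$. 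For each $r$, let $\bm{u}_{r+1} \in U \cap V^{(j)}$ be the chosen vertex of the domain containing $\bm{a}_r$, so $|\bm{a}_r - \bm{u}_{r+1}| \le \mu/2 \le \sqrt{\mu}/2$ (using $\mu \le 1$). I should also make sure the $\bm{u}_r$ are distinct: two distinct $\bm{a}_r, \bm{a}_s$ could a priori lie in the same domain, but then $|\zeta^r\bm{a}_r - \zeta^s\bm{a}_s| \ge |\zeta^r - \zeta^s| - |\bm{a}_r - \bm{a}_s| \ge$ (a constant depending only on $q$) $- \mu$, which exceeds $\sqrt{\mu}$ once $\mu$ is small enough relative to $q$ — so distinctness of the $\bm{a}_r$ forces distinctness of the domains, hence of the $\bm{u}_r$.

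Finally I would assemble the triangle-inequality estimate. Since $|\zeta^{s-r}| = 1$, for all $r,s \in [q]$,
\[
|\bm{u}_r - \zeta^{s-r}\bm{u}_s| \le |\bm{u}_r - \bm{a}_{r-1}| + |\bm{a}_{r-1} - \zeta^{s-r}\bm{a}_{s-1}| + |\zeta^{s-r}\bm{a}_{s-1} - \zeta^{s-r}\bm{u}_s|,
\]
where (indexing so that $\bm{a}_{r-1}$ is the point in the domain of $\bm{u}_r$) the first and third terms are each at most $\mu/2$, and the middle term satisfies $|\bm{a}_{r-1} - \zeta^{s-r}\bm{a}_{s-1}| = |\zeta^{r}(\bm{a}_{r-1} - \zeta^{s-r}\bm{a}_{s-1})| \cdot 1$; more directly, $|\bm{a}_{r-1} - \zeta^{s-r}\bm{a}_{s-1}| = |\zeta^{r-1}\bm{a}_{r-1} - \zeta^{s-1}\bm{a}_{s-1}|$ after multiplying by the unit scalar $\zeta^{r-1}$, which is $< \sqrt{\mu}$ by the conclusion of Lemma~\ref{coro: exist q vertices}. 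Hence $|\bm{u}_r - \zeta^{s-r}\bm{u}_s| < \sqrt{\mu} + \mu \le 2\sqrt{\mu}$, giving~\eqref{eq:key}. The only mildly delicate point — and the one I'd treat most carefully — is bookkeeping the relabelling between the index set $\Z/q\Z$ of Lemma~\ref{coro: exist q vertices} and the index set $[q]$ here, together with verifying that the hierarchy of constants makes all the smallness hypotheses ($\mu \le 1/q^2$, $q e^{-k\mu/32} < \eps/2$, $\mu$ small enough for the distinctness argument) simultaneously valid; none of these is hard, but they are where an error would hide.
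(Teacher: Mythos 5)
Your proof is correct and follows essentially the same approach as the paper: pigeonhole onto a single sphere $S_j$, pass to the union $A$ of the domains $D_m^{(j)}$ represented in $U\cap V^{(j)}$, apply Lemma~\ref{coro: exist q vertices} to extract nearly rotated points, and transfer back to the representative vertices via the triangle inequality using the $\mu/2$ diameter bound. You additionally spell out the distinctness of the resulting $\bm u_r$ and the verification of the smallness hypotheses ($\mu\le 1/q^2$, $\eps/2>qe^{-k\mu/32}$), both of which the paper leaves implicit.
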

\begin{proof}
By the pigeonhole principle, there exists $j\in [t]$ such that $|U\cap V^{(j)}| \ge |U|/t \ge \eps  n/2$. Let $A=\bigcup \{D_m^{(j)}\colon \bm{v}_m^{(j)}\in U\cap V^{(j)}\}$. 
As each domain $D_m^{(j)}$ has measure $\frac1n$, $\lambda(A)=|U\cap V^{(j)}|/n \ge \eps/2$. 
Since $\eps/2> qe^{-k\mu/32}$, 
\cref{coro: exist q vertices} ensures that $A$ contains distinct points $\bm{a}_1,\ldots,\bm{a}_q$ with $|\zeta^{r}\bm{a}_r-\zeta^s \bm{a}_s|<\sqrt{\mu}$ for all $r,s\in [q]$, where $\bm{a}_1\in D_{b_1}^{(j)},\ldots,\bm{a}_q\in D_{b_q}^{(j)}$.
For all $r,s\in [q]$, by the triangle inequality, we obtain 
\begin{align*}
|\bm{v}_{b_r}^{(j)}-\zeta^{s-r} \bm{v}_{b_s}^{(j)}| &\le |\bm{v}_{b_r}^{(j)}-\bm{a}_r|+|\bm{a}_r-\zeta^{s-r} \bm{a}_s|+|\zeta^{s-r} \bm{a}_s-\zeta^{s-r}\bm{v}_{b_s}^{(j)}|\\
&=|\bm{v}_{b_r}^{(j)}-\bm{a}_r|+|\zeta^r\bm{a}_r-\zeta^{s} \bm{a}_s|+|\bm{a}_s-\bm{v}_{b_s}^{(j)}| < \sqrt{\mu}+2\cdot(\mu/2)\le 2\sqrt{\mu}.
\end{align*}
Thus, the vertices $\bm{u}_1:=\bm{v}_{b_1}^{(j)},\ldots,\bm{u}_q:=\bm{v}_{b_q}^{(j)}$ satisfy \eqref{eq:key}.
\end{proof}

In the proof of \cref{thm:ramsey_restated}, we applied the following result whose proof relies on \cref{lem:co-degree-2} stated below.

\begin{lemma}\label{lem:co-degree-1}
Let $j\in [t]$, and let $\bm{u}_1, \ldots,\bm{u}_q$ be $q$ vertices in $V^{(j)}$ satisfying \eqref{eq:key}. 
Then  
any $p+1$ vertices in $\{\bm{u}_1, \ldots,\bm{u}_q\}$ have less than $\eps  tn$ common neighbors in 
$G_0$.     
\end{lemma}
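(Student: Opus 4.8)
The plan is to show that if a vertex $\bm{w}\in V$ is a common neighbor in $G_0$ of $p+1$ of the points $\bm{u}_1,\dots,\bm{u}_q$, then the inner products $\langle \bm{u}_i,\bm{w}\rangle$ (for $i$ in the relevant $(p+1)$-set) are all constrained to lie in the closed sector $\{z\in\C:|z|\le 1,\ \arg z\in[0,\tfrac{2\pi p}{q}]\}$, while the near-$\zeta$-rotation relations \eqref{eq:key} among the $\bm{u}_i$ force these $p+1$ inner products to be spread out by angles that are (almost) multiples of $2\pi/q$. Since $p+1$ angles that are essentially equally spaced at gaps $\ge 2\pi/q$ cannot all fit inside a sector of width $\tfrac{2\pi p}{q}$ unless they hug its boundary, we will conclude that $\langle \bm{u}_i,\bm{w}\rangle$ has imaginary part (after a suitable rotation) very close to $0$ for at least one $i$. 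Concretely, there is an index $i$ and a rotation $e^{i\beta}$ (a power of $\zeta$, up to the $O(\sqrt{\mu})$ error coming from \eqref{eq:key}) such that $|\operatorname{Im}(e^{i\beta}\langle \bm{u}_i,\bm{w}\rangle)|\le \mu$, i.e.\ $\bm{w}$ lies in a thin spherical strip around $\bm{u}_i$. This is exactly the hypothesis of \cref{lem:co-degree-2}, which is invoked next.

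The first step is the geometric/convexity observation. Fix $\bm{w}$ adjacent in $G_0$ to $\bm{u}_{i_1},\dots,\bm{u}_{i_{p+1}}$ with $i_1<\dots<i_{p+1}$. By the definition of $G_0$, each $\langle \bm{u}_{i_\ell},\bm{w}\rangle$ (interpreting the cross-sphere inner product as in Construction~1, and using that $j$ is fixed so all $\bm{u}_\ell\in V^{(j)}$ while $\bm{w}$ may be in any $V^{(h)}$) lies in the sector $\Sigma:=\{z:|z|\le 1,\ \arg z\in[0,\tfrac{2\pi p}{q}]\}$. Now \eqref{eq:key} gives $\bm{u}_r\approx \zeta^{s-r}\bm{u}_s$, hence $\langle \bm{u}_r,\bm{w}\rangle \approx \zeta^{s-r}\langle \bm{u}_s,\bm{w}\rangle$ with error $O(\sqrt{\mu})$ (the inner product is $1$-Lipschitz in each argument on the unit sphere). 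So the $p+1$ complex numbers $\langle \bm{u}_{i_\ell},\bm{w}\rangle$ are, up to $O(\sqrt{\mu})$, the rotations $\zeta^{i_\ell}\cdot\langle\bm{u}_0,\bm{w}\rangle$ of a single complex number $\zeta^{i_\ell}\bm{u}_{i_\ell}$-common value; here I would set $\bm{z}_0$ to be this common approximate value and note $\arg(\zeta^{i_\ell}\bm{z}_0)$ are $p+1$ distinct residues mod $2\pi/q$ times $2\pi/q$, so they occupy $p+1$ of the $q$ equally spaced directions. For all $p+1$ of the points $\zeta^{i_\ell}\bm{z}_0$ (equivalently $\langle\bm{u}_{i_\ell},\bm{w}\rangle$ rotated back) to lie in $\Sigma$ — using $p/q\le 1/2$, so $\Sigma$ is a sector of width at most $\pi$ and the relevant argument set is genuinely constrained — one checks that this pins $\bm{z}_0$ (and hence at least one $\langle\bm{u}_{i_\ell},\bm{w}\rangle$, after rotating by the appropriate power of $\zeta$) to within angle $O(\sqrt{\mu})$ of the real axis, whence $|\operatorname{Im}(e^{i\beta}\langle\bm{u}_{i_\ell},\bm{w}\rangle)|\le\mu$ for an appropriate $\beta\in\frac{2\pi}{q}\Z$. (The role of $p/q\le 1/2$, and of \cref{fact: inner product} for swapping the order of the inner product when $\bm{w}\in S_h$ with $h>j$, is precisely to make this pinning argument valid in all cross-sphere cases.)

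The second step is counting. For each choice of a $(p+1)$-subset $\{i_1,\dots,i_{p+1}\}\subseteq[q]$ and each admissible rotation $\beta$, the common neighbors $\bm{w}$ of that subset are all contained in $\bigcup_{h\in[t]}\{\bm{w}\in V^{(h)}: |\operatorname{Im}(e^{i\beta}\langle \bm{u}_{i_\ell},\bm{w}\rangle)|\le\mu\}$ for the index $i_\ell$ produced above — this is where \cref{lem:co-degree-2} will supply the bound, saying each such strip contains at most (roughly) $3\eta\cdot n$ vertices of each $V^{(h)}$ (via \cref{lem:area_of_band} with the parameter matched so that $\mu=\eta/\sqrt{2k}$, plus the fact that the $D_m^{(h)}$ have measure $1/n$ and small diameter, and $\bm{w}$ lies in $D_{m}^{(h)}$ implies $\bm{v}_m^{(h)}$ is in a slightly fattened strip). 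Summing over the $t$ spheres gives at most $\approx 3\eta t n$ common neighbors; since $\eta\ll_q\eps$, this is less than $\eps tn$, as required. The number of choices of $(p+1)$-subsets and rotations is a constant depending only on $q$, which is absorbed because we only need the bound for one particular $(p+1)$-subset at a time (indeed the statement of the lemma quantifies over "any $p+1$ vertices", so we fix the subset first and then bound its codegree directly).

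The main obstacle I anticipate is the convexity/pinning argument in the first step: making precise that $p+1$ points at angular gaps essentially equal to multiples of $2\pi/q$, all lying in a sector of width $\tfrac{2\pi p}{q}$, must include one whose argument is within $O(\sqrt{\mu})$ of an endpoint of the sector — and doing so uniformly over which $(p+1)$-subset of the $q$ directions is used and over the cross-sphere sign issues handled by \cref{fact: inner product}. The inequality $p/q\le 1/2$ is essential here (for $p/q>1/2$ the sector is non-convex and the claim fails, which is exactly why the paper later reduces the dense case to this one). Everything else — the Lipschitz estimate for inner products on the sphere, the translation from "$\bm{w}$ in a thin strip" to "few vertices $\bm{v}_m^{(h)}$", and the final arithmetic with the constant hierarchy $1/k\ll_q 1/t,\eta\ll_q\eps$ — is routine and will be deferred to \cref{lem:co-degree-2} and \cref{lem:area_of_band}.
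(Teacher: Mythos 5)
There is a genuine gap in the pinning/convexity step, and it is not repairable by tweaking constants. Your argument concludes from \eqref{eq:key} that the inner products $z_\ell=\langle\bm{u}_{i_\ell},\bm{w}\rangle$ satisfy $|z_\ell-\zeta^{i_1-i_\ell}z_1|\le 2\sqrt{\mu}$ (by Cauchy--Schwarz), and then, using that these $p+1$ numbers all lie in the sector $\Sigma$, that $z_1$ lies within $O(\sqrt{\mu})$ of a single ray; so the best you can deduce is $|\Im(\zeta^s z_1)|=O(\sqrt{\mu})$, \emph{not} $O(\mu)$. But a strip of the form $\{|\Im\langle\bm{x},\cdot\rangle|\le\sqrt{\mu}\}$ has measure of order $\sqrt{\mu}\sqrt{k}=\sqrt{k\mu}$ by \cref{lem:area_of_band}, and the isoperimetric step (\cref{coro: exist q vertices}) that produced \eqref{eq:key} in the first place requires $k\mu$ to be large (so that $qe^{-k\mu/32}<\eps/2$). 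These two demands are incompatible: $k\mu$ large forces $\sqrt{k\mu}$ large, so the strip measure bound is vacuous. This is precisely why the paper does \emph{not} attempt to show that every common neighbor lies in a thin strip. You have also misidentified what \cref{lem:co-degree-2} says: it is not a routine strip-measure bound, but the technical core of the proof, whose hypothesis is a \emph{pair} $\bm{u},\bm{u}'\in V^{(j)}$ with $|\bm{u}-\zeta^{\ell}\bm{u}'|<2\sqrt{\mu}$ for some $\ell\in\{p,\ldots,q-p\}$, and whose proof is a contradiction argument that re-applies isoperimetry inside the alleged large common neighborhood.

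The paper's proof of this lemma is in fact much shorter than what you attempted: a purely combinatorial pigeonhole argument shows that any $(p+1)$-subset of $\{\bm{u}_1,\ldots,\bm{u}_q\}$ contains two vertices $\bm{u}_r,\bm{u}_s$ with $s-r\in\{p,\ldots,q-p\}$ (this is where $p/q\le1/2$ enters, to guarantee $\{p,\ldots,q-p\}\neq\varnothing$); the lemma then follows by invoking \cref{lem:co-degree-2} on that pair, since a common neighbor of the $(p+1)$-set is a fortiori a common neighbor of the pair. The convexity/sign analysis you were reaching for does occur, but inside the proof of \cref{lem:co-degree-2}: assuming the pair has $\ge\eps tn$ common neighbors, one first discards the $O(\eps tn/6)$ neighbors lying in thin strips (Claim~\ref{claim:negligible-region}, the only place a strip-measure bound of the right width $3\mu$ is used), then applies \cref{lem: exist q vertices} \emph{again} inside the remaining common neighborhood to extract $q$ points $\bm{w}_0,\ldots,\bm{w}_{q-1}$ that are near $\zeta$-rotations of one another. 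Because $1+\zeta+\cdots+\zeta^{q-1}=0$, their average is tiny, so $\langle\frac1q\sum\bm{w}_s,\bm{u}-\zeta^{\ell}\bm{u}'\rangle$ is small; but the sector constraints force every summand $\langle\bm{w}_s,\bm{u}-\zeta^{\ell}\bm{u}'\rangle$ to have imaginary part (after a fixed rotation) of consistent sign and magnitude $\ge3\mu$, so the same quantity is large. That contradiction, not a direct measure count, is what kills the common neighborhood. Your proposal misses both the reduction to a pair and the barycentric cancellation trick, and the direct route you sketched cannot be made to close.
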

\begin{proof}
Consider any $(p+1)$-subset $A$ of $\{\bm{u}_1, \ldots,\bm{u}_q\}$. We claim that there exists a pair of vertices $\bm{u}_r,\bm{u}_s \in A$ with $s-r \in \{p,\ldots,q-p\}$. Note that $\{p,\ldots,q-p\}\neq\varnothing$ as $p/q\le 1/2$. For this, fix a vertex $\bm{u}_{r} \in A$. We may assume that every vertex $\bm{u}_s\in A\setminus \{\bm{u}_r\}$ satisfy 
$s\in \{r-p+1,\ldots,r+p-1\}\setminus\{r\}$, 
for otherwise, $\bm{u}_r$,$\bm{u}_s$ is the pair we seek. 
Thus, 
$A\setminus \{\bm{u}_r\} \subseteq \{\bm{u}_{r-p+1},\ldots,\bm{u}_{r+p-1}\} \setminus \{\bm{u}_r\}$. 
We partition $\{\bm{u}_{r-p+1},\ldots,\bm{u}_{r+p-1}\} \setminus \{\bm{u}_r\}$ 
into $(p-1)$ pairs $\{\bm{u}_{\ell},\bm{u}_{\ell+p}\}$, $r-p+1 \le \ell \le r-1$. By the pigeonhole principle, there is a desired pair $\bm{u}_{\ell}, \bm{u}_{\ell+p} \in A$.

By the above claim, $A$ contains two distinct vertices $\bm{u}_r,\bm{u}_s$ with $s-r \in \{p,\ldots,q-p\}$. Recall that $|\bm{u}_{r}-\zeta^{s-r} \bm{u}_{s}|< 2\sqrt{\mu}$. \cref{lem:co-degree-2} below then implies that $\bm{u}_r$ and $\bm{u}_s$ have less than $\eps  tn$ common neighbors in 
$G_0$. Consequently, vertices in $A$ have less than $\eps  tn$ common neighbors in 
$G_0$.
\end{proof}

The following lemma makes up the bulk of the proof of \cref{lem:co-degree-1}.

\begin{lemma}\label{lem:co-degree-2}
Let $j\in [t]$, and let $\bm{u},\bm{u}' \in V^{(j)}$ be two vertices such that $|\bm{u}-\zeta^{\ell}\bm{u}'|<2\sqrt{\mu}$ for some $\ell\in \{p,\ldots,q-p\}$. Then 
$\bm{u}$ and $\bm{u}'$ have less than $\eps  tn$ common neighbors in 
$G_0$.
\end{lemma}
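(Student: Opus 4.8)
The plan is to bound the common neighborhood of $\bm u$ and $\bm u'$ by a union of two spherical strips, one on each of the other spheres, and then apply \cref{lem:area_of_band}. Fix a common neighbor $\bm w$ of $\bm u$ and $\bm u'$, say $\bm w \in V^{(h)}$; since $\ell \ge p \ge 1$, no cross pair is missing, so $h \ne j$. By the construction of $G_0$, for some $\alpha, \beta \in [0, \tfrac{2\pi p}{q})$ we have $e^{-i\alpha}\langle \bm u, \bm w\rangle \in [0,1]$ and $e^{-i\beta}\langle \bm u', \bm w\rangle \in [0,1]$ if $h<j$ (and the symmetric condition with the inner products reversed if $h>j$; by \cref{fact: inner product} the two cases are interchangeable, so I treat $h<j$). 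In particular $\arg\langle \bm u, \bm w\rangle \in [0, \tfrac{2\pi p}{q})$ and $\arg\langle \bm u', \bm w\rangle \in [0, \tfrac{2\pi p}{q})$, both taken in $[0,2\pi)$.

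The key point is that $\bm u \approx \zeta^{\ell}\bm u'$ forces $\langle \bm u, \bm w\rangle \approx \zeta^{\ell}\langle \bm u', \bm w\rangle = e^{2\pi i \ell/q}\langle \bm u', \bm w\rangle$: indeed $|\langle \bm u, \bm w\rangle - \zeta^{\ell}\langle \bm u', \bm w\rangle| = |\langle \bm u - \zeta^{\ell}\bm u', \bm w\rangle| \le |\bm u - \zeta^{\ell}\bm u'| < 2\sqrt{\mu}$ by Cauchy--Schwarz since $|\bm w|=1$. So both $\langle \bm u, \bm w\rangle$ and (up to an error $2\sqrt\mu$) the rotation $e^{2\pi i \ell/q}\langle\bm u',\bm w\rangle$ have argument in $[0,\tfrac{2\pi p}{q})$; but the argument of $e^{2\pi i\ell/q}\langle\bm u',\bm w\rangle$ differs from that of $\langle\bm u',\bm w\rangle \in [0,\tfrac{2\pi p}{q})$ by $\tfrac{2\pi\ell}{q}$ with $\ell \in \{p,\dots,q-p\}$, so it lies in an arc that, modulo $2\pi$, is disjoint from $[0,\tfrac{2\pi p}{q})$ and moreover stays a bounded angular distance away from it (the relevant gap is at least of order $1/q$ since $\ell \ge p$ and $\ell + p \le q$). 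Hence for $\langle\bm u,\bm w\rangle$ to have argument in $[0,\tfrac{2\pi p}{q})$ while being $2\sqrt\mu$-close to a point whose argument is bounded away from that sector, $\langle\bm u,\bm w\rangle$ must be small in modulus — quantitatively, close to the origin, i.e.\ near the common endpoint of the two sectors. Chasing the geometry, this will yield $|\mathrm{Im}\langle\bm u,\bm w\rangle| \le \mu/\sqrt{2k}$ (after absorbing constants via the hierarchy $1/k \ll 1/t,\eta \ll \eps$ and $\mu = \eta/\sqrt{2k}$) — or, if the relevant sector boundary for $\bm u$ is the ray at angle $\tfrac{2\pi p}{q}$ rather than $0$, the analogous bound $|\mathrm{Im}(e^{-2\pi i p/q}\langle\bm u,\bm w\rangle)|$ small, which is again a strip of the type in \cref{lem:area_of_band} after replacing $\bm u$ by $e^{2\pi i p/q}\bm u$ (still on the sphere).

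Thus every common neighbor of $\bm u, \bm u'$ lies, on its sphere $S_h$, in one of a bounded number (at most $2$ per boundary ray, so a constant depending only on $q$) of strips of the form $\{\bm y : |\mathrm{Im}\langle \bm z, \bm y\rangle| \le \mu/\sqrt{2k}\}$ with $\mu = \eta/\sqrt{2k}$. By \cref{lem:area_of_band} each such strip has measure at most $3\eta$, so its intersection with $V^{(h)}$ has at most $3\eta n + o(n)$ vertices (using that the domains $D_m^{(h)}$ have measure $1/n$ and diameter $\le \mu/2$, a point near a vertex of a strip-domain contributes a negligible boundary error). Summing over the at most $t-1$ other spheres and the $O_q(1)$ strips per sphere, the total number of common neighbors is at most $O_q(\eta) \cdot tn + o(tn) < \eps tn$, using $\eta \ll_q \eps$ and $n$ large. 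The main obstacle is the second paragraph: carefully converting the qualitative statement ``$\arg$ in a sector, yet close to something outside it, forces small modulus'' into the precise inequality $|\mathrm{Im}\langle\bm u,\bm w\rangle| \le \mu/\sqrt{2k}$ (or its rotated analogue), keeping track of which of the two boundary rays ($0$ or $\tfrac{2\pi p}{q}$) is the obstruction as a function of $\ell \in \{p,\dots,q-p\}$ and of the sign of $h-j$, and verifying that the angular gap is genuinely bounded below so that the $2\sqrt\mu$ perturbation is harmless; this is exactly the ``convexity/geometry of the disc sector'' step flagged in the proof sketch, here in the favorable regime $p/q \le 1/2$.
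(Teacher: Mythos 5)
Your plan differs structurally from the paper's. You try to prove the lemma directly by showing that every common neighbour $\bm w$ of $\bm u,\bm u'$ is confined to a thin spherical strip, and then count. The paper instead argues by contradiction: assuming $\geq\eps tn$ common neighbours, it applies \cref{lem: exist q vertices} a \emph{second} time to the common neighbourhood to extract $q$ vertices $\bm w_0,\dots,\bm w_{q-1}$ that are approximate $\zeta$-rotations of one another, and then derives incompatible upper and lower bounds on $\bigl|\langle \tfrac1q\sum_s \bm w_s,\ \bm u-\zeta^\ell\bm u'\rangle\bigr|$.

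The direct approach as you have set it up does not close, and the numerical gap is not an ``absorbing constants'' matter. From Cauchy--Schwarz you get $|\langle \bm w,\bm u-\zeta^\ell\bm u'\rangle|<2\sqrt{\mu}$, and all the sector geometry can force is that $\langle \bm w,\bm u\rangle$ lies within distance $O(q\sqrt{\mu})$ of one of the two boundary rays of the edge-sector (or, for interior $\ell$, within $O(q\sqrt{\mu})$ of the origin): when $\ell=p$ or $\ell=q-p$ the two relevant sectors are \emph{adjacent}, sharing a ray, so the ``bounded angular gap'' you invoke vanishes and $\langle\bm w,\bm u\rangle$ can have modulus close to $1$. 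Either way you end up with a strip condition of the form $|\Im(\zeta^{-s}\langle \bm w,\bm u\rangle)|\lesssim q\sqrt{\mu}$, not the $\mu/\sqrt{2k}$ you wrote; those two quantities differ by a factor of order $\sqrt{2k}\,(2k)^{1/4}/\sqrt{\eta}$, which is enormous under the hierarchy. Feeding width $q\sqrt{\mu}$ into \cref{lem:area_of_band} gives strip measure at most $3q\sqrt{\mu}\sqrt{2k}=3q\sqrt{\eta}\,(2k)^{1/4}$, and this is not small. Indeed \cref{coro: exist q vertices} (which is needed in \cref{lem: exist q vertices} with $\nu=\mu$) requires $\eps/2>q e^{-k\mu/32}$, i.e.\ $\sqrt{k}\gtrsim \log(q/\eps)/\eta$, so $\sqrt{\eta}\,(2k)^{1/4}\gtrsim\sqrt{\log(q/\eps)}\geq 1$: the strip bound is vacuous for every admissible choice of $k,\eta$.

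The paper's proof sidesteps exactly this obstruction by multiplying two independent $O(\sqrt{\mu})$ gains. Because $1+\zeta+\cdots+\zeta^{q-1}=0$, the approximate $\zeta$-spacing of $\bm w_0,\dots,\bm w_{q-1}$ forces $|\tfrac1q\sum_s\bm w_s|<2\sqrt{\mu}$, and combined with $|\bm u-\zeta^\ell\bm u'|<2\sqrt{\mu}$ this gives the quadratically improved bound $|\langle\tfrac1q\sum_s\bm w_s,\ \bm u-\zeta^\ell\bm u'\rangle|<4\mu$ in \cref{claim}. Against this, the paper restricts attention to common neighbours in $J(\bm u)\cap J(\bm u')$, whose complement is controlled by a genuinely thin strip (width $3.5\mu$, measure $O(q\eta)$ per sphere by \cref{lem:area_of_band}), and shows each $\Im(\zeta^{-s}\langle\bm w_r,\bm u\rangle)$ etc.\ is $\geq 3\mu$ in absolute value with a consistent sign, giving the opposing bound $\geq 6\mu$ in \cref{claim: case j>h,claim: case j<h}. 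It is this $\sqrt{\mu}\times\sqrt{\mu}=\mu$ product that a single application of Cauchy--Schwarz cannot supply; any rescue of your approach would need some analogue of the $q$-tuple averaging to convert the $\sqrt{\mu}$-scale hypothesis into a $\mu$-scale conclusion before invoking \cref{lem:area_of_band}.
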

\begin{proof}
Suppose to the contrary that $\bm{u}$ and $\bm{u}'$ have at least $\eps  tn$ common neighbors in $G_0$. For $\bm{x}\in \bigcup_{j \in [t]}S_j$, set
\[
J(\bm{x}):=\{\bm{v}\in V: |\Im(\zeta^s\langle \bm{v},\bm{x} \rangle)|\ge 3\mu \enskip \text{for all } s \in \Z/q\Z\}.
\]

\begin{claim}\label{claim:negligible-region}
For all $\bm{x}\in  \bigcup_{j \in [t]}S_j$, we have $|\overline{J(\bm{x})}| \le \frac16\eps tn$, where $\overline{J(\bm{x})}:=V\setminus J(\bm{x})$.    
\end{claim}
\begin{poc}
For $h\in [t]$, let $J^{(h)}=J(\bm{x})\cap V^{(h)}$. Then $|\overline{J(\bm{x})}|=|V^{(1)}\setminus J^{(1)}|+\cdots+|V^{(t)}\setminus J^{(t)}|$. Hence, in order to prove the claim, it suffices to show $|V^{(h)}\setminus J^{(h)}| \le \frac16\eps  n$
for all $h\in [t]$. Fix an arbitrary $h\in [t]$. Note that $V^{(h)}\setminus J^{(h)}$ consists of vertices $\bm{v}_m^{(h)}$ such that $|\Im
(\zeta^s\langle \bm{v}_m^{(h)}, \bm{x}\rangle)|< 3\mu$ for some $s\in \Z/q\Z$.
To bound the number of such vertices, define $L= \bigcup \{D_m^{(h)}: |\Im
\langle \bm{v}_m^{(h)}, \bm{x}\rangle|< 3\mu \}$.
Since each domain $D_m^{(h)}\ni \bm{v}_m^{(h)}$ has diameter at most $\mu/2$, 
$L$ is contained in the set $\{\bm{y} \in S_h : |\Im \langle \bm{y}, \bm{x}\rangle|< (3+\frac{1}{2})\mu\}$, which by \cref{lem:area_of_band} has measure at most $11\eps$. It follows that the number of vertices $\bm{v}_m^{(h)}$ with $|\Im \langle \bm{v}_m^{(h)}, \bm{x}\rangle|< 3\mu$ is $\lambda(L)n \le 11\eta n$. For each $s\in \Z/q\Z$, since $\zeta^s\langle \bm{v}_m^{(h)}, \bm{x}\rangle=\langle \bm{v}_m^{(h)}, \zeta^{-s}\bm{x}\rangle$ and $\zeta^{-s}\bm{x}\in \bigcup_{j \in [t]}S_j$, the previous argument shows that there are at most $11\eta n$ vertices $\bm{v}_m^{(h)}$ satisfying $|\Im
(\zeta^s\langle \bm{v}_m^{(h)}, \bm{x}\rangle)|< 3\mu$. Thus, by the union bound, we get $|V^{(h)}\setminus J^{(h)}| \le 11q\eta n \le \frac16 \eps  n$, as desired. 
\end{poc}

Let $W=(N_{G_0}(\bm{u})\cap  J(\bm{u})) \cap (N_{G_0}(\bm{u}')\cap J(\bm{u}'))$. From \cref{claim:negligible-region} and our hypothesis, we see that 
\[
|W\setminus V^{(j)}|\ge |N_{G_0}(\bm{u})\cap  N_{G_0}(\bm{u}')|-|\overline{J(\bm{u})}|-|\overline{J(\bm{u}')}|-|V^{(j)}| \ge \eps tn-(\tfrac13 \eps  tn+n) \ge \tfrac12 \eps  tn
\]
assuming $\eps  t \ge 6$.
Thus, by \cref{lem: exist q vertices}, there exist $h \in [t]\setminus \{j\}$ and $q$ vertices $\bm{w}_0,\ldots, \bm{w}_{q-1}\in W \cap V^{(h)}$ such that for all $r,s \in \Z/q\Z$, $|\bm{w}_r-\zeta^{s-r} \bm{w}_s| < 2\sqrt{\mu}$.

\begin{claim}\label{claim}
$|\langle \frac{1}{q}(\bm{w}_0+\cdots +\bm{w}_{q-1}), \bm{u}-\zeta^{\ell}\bm{u}'\rangle|< 4\mu$.  
\end{claim}
\begin{poc}
By the triangle inequality, 
\begin{align*}
    |\bm{w}_0+\cdots +\bm{w}_{q-1}|&\le |(1+\zeta+\cdots+\zeta^{q-1})\bm{w}_0|+ \sum_{s=1}^{q}|\bm{w}_s-\zeta^{q-s}\bm{w}_0|\\
    &= \sum_{s=1}^{q}|\bm{w}_s-\zeta^{-s}\bm{w}_0|< 2q\sqrt{\mu},
\end{align*}
where in the second line we used the facts that $1+\zeta+\cdots+\zeta^{q-1}=0$ and $\zeta^q=1$. 
Thus, the Cauchy--Schwarz inequality gives $|\langle \frac{1}{q}(\bm{w}_0+\cdots +\bm{w}_{q-1}), \bm{u}-\zeta^{\ell}\bm{u}'\rangle|\le |\frac{1}{q}(\bm{w}_0+\cdots +\bm{w}_{q-1})|\cdot |\bm{u}-\zeta^{\ell}\bm{u}'| < 4\mu$.
\end{poc}

When $j>h$, we get the following contradiction.

\begin{claim}\label{claim: case j>h}
If $j>h$, then $|\langle \frac{1}{q}(\bm{w}_0+\cdots +\bm{w}_{q-1}), \bm{u}-\zeta^{\ell}\bm{u}'\rangle|\ge 6\mu$.  
\end{claim}
\begin{poc}
Consider an arbitrary $s\in \Z/q\Z$. As $h<j$, the cross pairs $(\bm{w}_s,\bm{u}), (\bm{w}_s,\bm{u}')\in V^{(h)}\times V^{(j)}$ are edges of $G_0$ and $p/q\le 1/2$, we have 
\begin{equation}\label{eq:arg-1}
\arg \langle \bm{w}_s,\bm{u}\rangle,\arg \langle \bm{w}_s,\bm{u}'\rangle\in [0, \tfrac{2\pi p}{q}] \subseteq [0,\pi].    
\end{equation}
This, together with the fact that $\bm{w}_s\in J(\bm{u})$, shows
\[
\Im \langle \bm{w}_s,\bm{u}\rangle=|\Im \langle \bm{w}_s,\bm{u}\rangle| \ge 3\mu.
\]
Suppose that $p \le \ell \le q/2$. Then \eqref{eq:arg-1} implies $\arg(-\zeta^{-\ell}\langle \bm{w}_s,\bm{u}'\rangle) \in [\pi-\frac{2\pi\ell}{q},\pi+\frac{2\pi p}{q}-\frac{2\pi\ell}{q}] \subseteq [0,\pi]$.
Hence  
\begin{equation*}
\Im \langle \bm{w}_s,-\zeta^{\ell}\bm{u}'\rangle\overset{\cref{fact: inner product}}{=}\Im(-\zeta^{-\ell}\langle \bm{w}_s,\bm{u}'\rangle)=|\Im(\zeta^{-\ell}\langle \bm{w}_s,\bm{u}'\rangle)| \ge 3\mu    
\end{equation*}
for $\bm{w}_s\in J(\bm{u}')$.
Therefore, in this case we have
\[
|\langle \tfrac{1}{q}(\bm{w}_0+\cdots +\bm{w}_{q-1}), \bm{u}-\zeta^{\ell}\bm{u}'\rangle| \ge \Im\, \langle \tfrac{1}{q}(\bm{w}_0+\cdots +\bm{w}_{q-1}), \bm{u}-\zeta^{\ell}\bm{u}'\rangle \ge 6\mu.
\]
Now suppose that $q/2 \le \ell \le q-p$. We learn from \eqref{eq:arg-1} that $\arg(\zeta^{-p}\langle \bm{w}_s,\bm{u}\rangle) \in [-\frac{2\pi p}{q},0] \subseteq [-\pi,0]$, and that $\arg(-\zeta^{-p-\ell}\langle \bm{w}_s,\bm{u}'\rangle) \in [\pi-\frac{2\pi (p+\ell)}{q},\pi-\frac{2\pi \ell}{q}] \subseteq [-\pi,0]$. Hence,
\begin{align*}
  \Im(\zeta^{-p}\langle \bm{w}_s,\bm{u}\rangle) &= -|\Im(\zeta^{-p}\langle \bm{w}_s,\bm{u}\rangle)| \le -3\mu, \mbox{\ and }
\\
\Im(\zeta^{-p} \langle \bm{w}_s,-\zeta^{\ell}\bm{u}'\rangle) &=\Im(-\zeta^{-p-\ell} \langle \bm{w}_s,\bm{u}'\rangle)=-|\Im(\zeta^{-p-\ell}\langle \bm{w}_s,\bm{u}'\rangle)| \le -3\mu.  
\end{align*}
Thus, in the case when $q/2\le \ell \le q-p$ we also have
\begin{align*}
|\langle \tfrac{1}{q}(\bm{w}_0+\cdots +\bm{w}_{q-1}), \bm{u}-\zeta^{\ell}\bm{u}'\rangle| &=|\zeta^{-p}\langle \tfrac{1}{q}(\bm{w}_0+\cdots +\bm{w}_{q-1}), \bm{u}-\zeta^{\ell}\bm{u}'\rangle|\\
&\ge -\Im(\zeta^{-p}\langle \tfrac{1}{q}(\bm{w}_0+\cdots +\bm{w}_{q-1}), \bm{u}-\zeta^{\ell}\bm{u}'\rangle)\ge 6\mu,
\end{align*}
completing the proof of \cref{claim: case j>h}.
\end{poc}

In the following claim, we deal with the case when $j<h$.
\begin{claim}\label{claim: case j<h}
If $j<h$, then $|\langle \frac{1}{q}(\bm{w}_0+\cdots +\bm{w}_{q-1}), \bm{u}-\zeta^{\ell}\bm{u}'\rangle|\ge 6\mu$.
\end{claim}
\begin{poc}
We have $\arg \langle \bm{w}_s,\bm{u}\rangle, \arg \langle \bm{w}_s,\bm{u}'\rangle\in [\frac{2\pi (q-p)}{q}, 2\pi]$. Thus, 
$\arg( \zeta^p\langle \bm{w}_s,\bm{u}\rangle),  \arg(\zeta^p\langle \bm{w}_s,\bm{u}'\rangle) \in [0, \frac{2\pi p}{q}]$.
From this one can show that $\arg (\zeta^p \langle \bm{w}_s,\bm{u}\rangle), \arg (\zeta^p\langle \bm{w}_s,-\zeta^{\ell} \bm{u}'\rangle) \in [0,\pi]$
when $p \le \ell \le q/2$, and that
$\arg \langle \bm{w}_s,\bm{u}\rangle, \arg \langle \bm{w}_s,-\zeta^{\ell} \bm{u}'\rangle)\in [\pi, 2\pi]$ 
when $q/2\le \ell \le q-p$.

We first consider the case when $p \le \ell \le q/2$. As $\bm{w}_s\in J(\bm{u})\cap J(\bm{u}')$, using \cref{fact: inner product} we get that $\Im(\zeta^p \langle \bm{w}_s, \bm{u}\rangle) =-\Im(\zeta^{-p}\langle \bm{u},\bm{w}_s\rangle)\ge 3\mu$ and $\Im(\zeta^p \langle \bm{w}_s, -\zeta^{\ell} \bm{u}'\rangle)\ge 3\mu$.
Hence, 
\[
|\langle \tfrac{1}{q}(\bm{w}_0+\cdots +\bm{w}_{q-1}), \bm{u}-\zeta^{\ell}\bm{u}'\rangle|\ge \Im (\zeta^p \langle \tfrac{1}{q}(\bm{w}_0+\cdots +\bm{w}_{q-1}), \bm{u}-\zeta^{\ell}\bm{u}'\rangle) \ge 6\mu.
\]
For $q/2\le \ell \le q-p$, the same argument gives that $\Im \langle \bm{w}_s, \bm{u}\rangle \le -3\mu$ and $ \Im \langle \bm{w}_s, -\zeta^{\ell} \bm{u}'\rangle\le -3\mu$, so
\[
|\langle \tfrac{1}{q}(\bm{w}_0+\cdots +\bm{w}_{q-1}), \bm{u}-\zeta^{\ell}\bm{u}'\rangle|\ge -\Im \langle \tfrac{1}{q}(\bm{w}_0+\cdots +\bm{w}_{q-1}), \bm{u}-\zeta^{\ell}\bm{u}'\rangle\ge 6\mu,
\]
which finishes the proof of \cref{claim: case j<h}.
\end{poc}
As Claims \ref{claim: case j>h} and \ref{claim: case j<h} contradict \cref{claim}, we conclude that 
$\bm{u}$ and $\bm{u}'$ have less than $\eps  tn$ common neighbors in $G_0$.
\end{proof}

\subsection{The case when \texorpdfstring{$\frac{p}{q}>\frac{1}{2}$}{dense}} 
\paragraph{Construction 2.} 
Note that $\frac{q-p}{q} < \frac{1}{2}$ as $\frac{p}{q}>\frac{1}{2}$. Let $G_0, \ldots, G_{q-1}$ be the graphs (on the same vertex set $V$) obtained by using Construction 1 with $\Big(q-p,q,\frac{\eps}{2q^2{q \choose p-1}}\Big)$ playing the role of $(p,q,\eps)$.

We will show that the complement graphs $\overline{G}_0, \ldots, \overline{G}_{q-1}$ satisfy \cref{thm:ramsey_restated}.

\begin{proof}[Proof of \cref{thm:ramsey_restated} for $\frac{p}{q}> \frac{1}{2}$]
(i) This follows immediately from the construction and part (i) of the case $p/q\le 1/2$. 

(ii) Let $f \in \Z/q\Z$ and $U \subseteq V$ be any vertex subset of size at least $\eps  tn$.  
We need to show that 
$U$ contains $q$ vertices, 
any $p+1$ of 
which have less than $\eps  tn$ common neighbors in $\overline{G}_f$.
It was shown in \cref{sec:3.1} that there exist $j \in [t]$ and $q$ vertices $ \bm{u}_{1}, \ldots, \bm{u}_{q} \in U \cap V^{(j)}$ such that every $q-p+1$ of them have less than $\frac{\eps  nt}{2q^2 {q\choose p-1}}$ common neighbors in $G_f$.
Thus, the number of vertices of $G_f$ having at least $q-p+1$ neighbors in $\{ \bm{u}_{1}, \ldots, \bm{u}_{q}\}$ is less than ${q \choose q-p+1} \cdot \frac{\eps  tn}{2q^2 {q\choose p-1}} = \frac{\eps  tn}{2q^2}$.
Let $X$ be the set of vertices of $G_f$ such that each of them is adjacent to exactly $q-p$ vertices of $\{ \bm{u}_{1}, \ldots, \bm{u}_{q}\}$. 
Then, the sum of degrees of $\bm{u}_{1}, \ldots, \bm{u}_{q}$ in $G_f$ is less than 
\[
\frac{\eps  tn}{2q^2} \cdot q + tn \cdot (q-p-1) + |X|.
\]
On the other hand, by Lemma~\ref{lem:degree} below, the sum of degrees of $ \bm{u}_{1}, \ldots, \bm{u}_{q}$ in $G_f$ is at least 
\[
q\cdot (t-1) \cdot \Big(\frac{q-p}{q}-\frac{\eps }{6q}\Big)n.
\]
Comparing these two bounds, we get $|X|>  tn -(q-p)n- \frac{\eps tn}{2q} -\frac{\eps  (t-1)n}{6} \ge tn-\eps  tn$.
It shows that all but less than $\eps  tn$ vertices of $\overline{G}_f$ have exactly $p$ neighbors in $\{\bm{u}_{1}, \ldots, \bm{u}_{q}\}$. Hence, in $\overline{G}_f$, every $p+1$ vertices of $\{\bm{u}_{1}, \ldots, \bm{u}_{q}\}$ have less than $\eps  tn$ common neighbors.
\end{proof}

To complete the proof of \cref{thm:ramsey_restated}, we only need to prove the following lemma.

\begin{lemma}\label{lem:degree}
    For any distinct $j, h\in [t]$ and $f \in \Z/q\Z$, in the graph $G_f$ every vertex of $V^{(j)}$ has at least $(\frac{q-p}{q} - \frac{\eps}{6q})n$ neighbors in $V^{(h)}$. 
\end{lemma}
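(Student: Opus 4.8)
\textbf{Proof plan for Lemma~\ref{lem:degree}.}

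The plan is to fix a vertex $\bm{u}\in V^{(j)}$ and count, for each of the $n$ vertices $\bm{v}=\bm{v}_m^{(h)}\in V^{(h)}$, whether the cross pair $\{\bm{u},\bm{v}\}$ lies in $G_f$. By the edge rule of Construction~1, $\{\bm{u},\bm{v}\}\in G_f$ iff there is $\alpha\in[\tfrac{2\pi f}{q},\tfrac{2\pi(f+p')}{q})$ with $e^{-i\alpha}\langle\cdot,\cdot\rangle\in[0,1]$, where $p':=q-p$ is the density parameter actually used in Construction~2 (and the ordering of the pair inside $V^{(h)}\times V^{(j)}$ vs $V^{(j)}\times V^{(h)}$ is handled by \cref{fact: inner product}, i.e.\ conjugating the inner product reflects the argument, which only permutes the relevant arcs). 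In other words, $\{\bm{u},\bm{v}\}\notin G_f$ precisely when the argument of the (appropriately oriented) inner product $\langle \bm{u},\bm{v}\rangle$ lies in a fixed circular arc of angular length $\tfrac{2\pi(q-p')}{q}=\tfrac{2\pi p}{q}$; so it suffices to show that at most $(\tfrac{p}{q}+\tfrac{\eps}{6q})n$ vertices $\bm{v}\in V^{(h)}$ have $\arg\langle\bm{u},\bm{v}\rangle$ in that arc.

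The key step is to pass from the discrete count over $V^{(h)}$ to a measure statement on the sphere $S_h=\textsf{S}^{k-1}(\mathbb{C})$, and then to cover the "bad" arc by $q$ strips of the type controlled by \cref{lem:area_of_band}. Concretely: the bad arc of length $\tfrac{2\pi p}{q}$ can be written as a union of at most $q$ half-planes through the origin of the form $\{z:\ \Im(\zeta^s z)\ge 0\}$ intersected appropriately, but the cleaner route is to bound the \emph{complement}. Since the good arc has length $\tfrac{2\pi(q-p)}{q}$, which is a positive multiple of $\tfrac{2\pi}{q}$, it contains the union of $q-p$ "open sectors" $\{z:\arg(\zeta^{-s}z)\in(0,\tfrac{2\pi}{q})\}$; the region of $\bm{y}\in S_h$ for which $\langle\bm{u},\bm{y}\rangle$ falls into the narrow boundary layer near these $q-p$ sector-boundaries (rays at angles multiples of $\tfrac{2\pi}{q}$, rotated by $\arg$ of things) is contained in a union of $q$ strips $\{\bm{y}:|\Im(\zeta^s\langle\bm{u},\bm{y}\rangle)|\le \tfrac{\mu'}{\sqrt{2k}}\cdot(\text{const})\}$ where $\mu'$ is the new parameter from Construction~2; each such strip has measure $O(\eta')$ by \cref{lem:area_of_band} (using $\zeta^s\langle\bm{u},\bm{y}\rangle=\langle\bm{u},\zeta^{-s}\bm{y}\rangle$ to reduce to a single application), so their union has measure at most $\tfrac{\eps}{12q}$ after choosing constants small enough. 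Away from this boundary layer, a point $\bm{y}$ with $\arg\langle\bm{u},\bm{y}\rangle$ in the good arc remains in the good arc under the perturbation from $\bm{y}$ to a nearby $\bm{v}_m^{(h)}$ (the domains $D_m^{(h)}$ have diameter $\le\mu'/2$, which moves the inner product by at most $\mu'/2$ in modulus, hence moves its argument by a controlled amount once we are not within $\approx\mu'$ of the origin — and the set of $\bm{y}$ within $\mu'$ of making $\langle\bm{u},\bm{y}\rangle$ near $0$ is again a strip of small measure by \cref{lem:area_of_band}). Therefore the measure of $\{\bm{y}\in S_h:\arg\langle\bm{u},\bm{y}\rangle$ in the bad arc$\}$ exceeds the discrete fraction $\tfrac{1}{n}\cdot|\{v\in V^{(h)}:\{\bm{u},\bm{v}\}\notin G_f\}|$ by at most $\tfrac{\eps}{6q}$; and the former equals exactly $\tfrac{p}{q}$ by rotational invariance of $\lambda$ (the bad arc has angular measure $\tfrac{2\pi p}{q}$ out of $2\pi$, and for fixed $\bm{u}$ the pushforward of $\lambda$ under $\bm{y}\mapsto\arg\langle\bm{u},\bm{y}\rangle$ is uniform on $[0,2\pi)$ since $\langle\bm{u},e^{i\theta}\bm{y}\rangle=e^{i\theta}\langle\bm{u},\bm{y}\rangle$ and one may rotate in the complex line through $\bm{u}$). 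Combining, the non-neighbours of $\bm{u}$ in $V^{(h)}$ number at most $(\tfrac{p}{q}+\tfrac{\eps}{6q})n$, i.e.\ $\bm{u}$ has at least $(\tfrac{q-p}{q}-\tfrac{\eps}{6q})n$ neighbours in $V^{(h)}$, as claimed.

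The main obstacle I expect is the "stability under discretization" estimate: translating the statement "$\arg\langle\bm{u},\bm{y}\rangle$ lies in an arc" into something robust under replacing $\bm{y}$ by the sample point $\bm{v}_m^{(h)}$ of its domain, near the two endpoints of the arc and near the origin of the complex plane. The argument near the origin is genuinely needed because $\arg$ is discontinuous there, and this is exactly why the strip bound \cref{lem:area_of_band} is invoked: the preimage $\{\bm{y}:|\langle\bm{u},\bm{y}\rangle|<\mu'\}$ is contained in a bounded union of strips (one can write $|\langle\bm{u},\bm{y}\rangle|^2=(\Re\langle\bm{u},\bm{y}\rangle)^2+(\Im\langle\bm{u},\bm{y}\rangle)^2$ and bound each strip $\{|\Re\langle\bm{u},\bm{y}\rangle|<\mu'\}$, $\{|\Im\langle\bm{u},\bm{y}\rangle|<\mu'\}$ via \cref{lem:area_of_band} after the appropriate rotation $z\mapsto iz$), so it has small measure; the same lemma controls the $O(\mu')$-neighbourhoods of the $q$ boundary rays of the sectors. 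Once these "bad strips" are shown to have total measure $\le\eps/(6q)$ — which holds because each strip has measure $O(\eta')$ and there are $O(q)$ of them, and $\eta'\ll_q\eps$ by the choice of parameters inherited from Construction~2 — the rest is the routine rotational-invariance computation above.
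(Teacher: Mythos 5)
Your proposal follows essentially the same strategy as the paper's proof: compute the measure of the ``good'' region via rotational invariance of the sphere (pushing $\lambda$ forward by $\bm{y}\mapsto\arg\langle\bm{u},\bm{y}\rangle$ gives the uniform measure on $[0,2\pi)$), cover the boundary of that region---the rays at multiples of $2\pi/q$ together with the neighbourhood of the origin where $\arg$ is discontinuous---by $O(q)$ strips controlled via \cref{lem:area_of_band}, and then use the diameter bound on the domains $D_m^{(h)}$ to pass from the continuous measure to the discrete vertex count; the paper packages this ``stability under discretization'' step as its \cref{claim: Dm intersect} by defining $K(\bm{u})$ and noting that the components of $\mathbb D\setminus\widetilde K$ are pairwise $6\mu$-separated, which is the same argument you sketch. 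One minor slip in phrasing: near the end you state that the measure of the bad region exceeds the discrete bad fraction by at most $\eps/(6q)$, but the inequality you actually need (and which your argument delivers) is the reverse---the discrete fraction of non-neighbours exceeds the measure $p/q$ by at most $\eps/(6q)$.
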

\begin{proof}
    We consider only the case $h<j$ as the case $h>j$ can be argued similarly.
    Fix any vertex $\bm{u} \in V^{(j)}$. By rotating the sphere $S_h$ by the map $\bm{x} \mapsto \zeta^{f}\bm{x}$, we may assume that $f=0$. For $\bm{y}\in S_j$, let 
    $$I(\bm{y}) = \Big\{\bm{x} \in S_h : \arg\langle \bm{x}, \bm{y} \rangle \in \big[0, \tfrac{2\pi (q-p)}{q}\big]  \Big\}$$
    and 
    $$ K(\bm{y}) = \{\bm{x}\in S_h: |\Im(\zeta^s\langle \bm{x}, \bm{y} \rangle)|\le 3\mu \enskip \text{for some } s \in \Z/q\Z\}.$$
    First we prove the following claim.
    \begin{claim}\label{claim: Dm intersect}
        Let $m\in [n]$. There is no $D_m^{(h)}$ intersecting both $I(\bm{u}) \setminus K(\bm{u})$ and $\overline{I(\bm{u})} \setminus K(\bm{u})$. 
    \end{claim}
   \begin{poc} 
    Suppose to the contrary that $\bm{z}\in D_m^{(h)}\cap I(\bm{u}) \setminus K(\bm{u})$ and $\bm{z^*}\in D_m^{(h)}\cap \overline{I(\bm{u})} \setminus K(\bm{u})$. Note that the diameter of $D_m^{(h)}$ is at most $\mu/2$, we have $|\bm{z}-\bm{z^*}|\leq \mu/2$, which implies that $|\langle \bm{z},\bm{u}\rangle-\langle \bm{z^*}, \bm{u}\rangle|\leq |\bm{z}-\bm{z^*}|\cdot|\bm{u}|\leq \mu/2$ by Cauchy–Schwarz inequality. Now we show that $|\langle \bm{z},\bm{u}\rangle-\langle \bm{z^*}, \bm{u}\rangle|\geq 6\mu$ to get a contradiction.
    
    Let $\mathbb{D} = \{z \in \mathbb{C} : |z| \leq 1\}$ be the unit circle in $\mathbb{C}$ 
    and let $$\widetilde{K} = \{z \in \mathbb{D} : |\Im(\zeta^s z)|\le 3\mu \enskip \text{for some } s \in \Z/q\Z\}.$$
    Then $\mathbb{D} \setminus \widetilde{K}$ consists  
    of $2q$ connected components when $q$ is odd, and $q$ components when $q$ is even.
    We now consider the case when $q$ is odd; the case $q$ is even is similar. 
    For $\ell\in[2q]$, let $I_\ell$ be the component contained in $\{ z \in \mathbb{D} : \arg z \in [\frac{2\pi (\ell-1)}{2q}, \frac{2\pi\ell}{2q}]\}$.
    Observe that each strip $\{z \in \mathbb{D} : |\Im(\zeta^s z)| \leq 3\mu\}$, $s\in \Z/q\Z$, has width $6\mu$. We can then infer that every pair of distinct components $I_{\ell_1}$ and $I_{\ell_2}$ have distance at least $6 \mu$ as they are separated by one of those strips. This finishes the proof as $\langle \bm{z},\bm{u}\rangle \in \bigcup_{\ell \in [2(q-p)]} I_\ell$ and $\langle \bm{z^*},\bm{u}\rangle \in \bigcup_{\ell \in [2q] \setminus [2(q-p)]} I_\ell$ are in different components. 
    \end{poc}

    By \cref{claim: Dm intersect}, for every $D_m^{(h)}$ intersecting with $I(\bm{u}) \setminus K(\bm{u})$, either $\bm{v}_m^{(h)} \in N_{G_0}(\bm{u}) \cap V^{(h)}$ or $D_m^{(h)} \cap K(\bm{u}) \neq \varnothing$.
    Let $W =K(\bm{u})\cap V^{(h)}$. 
    Then by the same argument as in \cref{claim:negligible-region}, $\lambda(K(\bm{u}))\le 11q\eta $ and $|W| \leq 11 q \eta n$.
    Therefore, we have $|N_{G_0}(\bm{u}) \cap V^{(h)}| \geq \lambda(I(\bm{u}) \setminus K(\bm{u})) n  - |W| \geq \frac{q-p}{q}n - 11q\eta n-11 q \eta n  \geq (\frac{q-p}{q} - \frac{\eps}{6q})n$, 
    concluding the proof.
\end{proof}

\end{document}